\theoremstyle{plain}
\newtheorem{theorem}{Theorem}[section]
\newtheorem{lemma}[theorem]{Lemma}
\newtheorem{proposition}[theorem]{Proposition}
\newtheorem{remark}[theorem]{Remark}
\theoremstyle{definition}
\newtheorem{definition}[theorem]{Definition}
\newcommand{\rd}{\mathrm{d}}
\begin{document}
  \numberwithin{equation}{section}
\title[]{A bilinear fractional integral operator \\ for Euler-Riesz systems}
\author[]{Nuno J. Alves}
\address[Nuno J. Alves]{
      University of Vienna, Faculty of Mathematics, Oskar-Morgenstern-Platz 1, 1090 Vienna, Austria.}
\email{nuno.januario.alves@univie.ac.at}

\author[]{Loukas Grafakos}
\address[Loukas Grafakos]{
      University of Missouri, Department of Mathematics, Columbia MO 65211, USA.}
\email{grafakosl@missouri.edu}

\author[]{Athanasios E. Tzavaras}
\address[Athanasios E. Tzavaras]{
      King Abdullah University of Science and Technology, CEMSE Division, Thuwal, Saudi Arabia, 23955-6900.}
      \email{athanasios.tzavaras@kaust.edu.sa}

\keywords{Marcinkiewicz interpolation, Euler-Riesz equations, compensated integrability}
\subjclass[2020]{42B20, 42B37, 35Q35} 

\maketitle
\thispagestyle{empty} 

\begin{abstract}
We establish a uniform estimate for a bilinear fractional integral operator via restricted weak-type endpoint estimates and   
Marcinkiewicz interpolation. This estimate is crucial in the integrability analysis of a tensor-valued bilinear fractional integral operator associated with Euler-Riesz systems modeling mean-field interactions induced by a singular kernel. The tensorial operator arises from a reformulation of the Euler-Riesz system that yields a gain in integrability for finite energy solutions through compensated integrability. Additionally, for smooth periodic solutions of the reformulated system, we derive a stability result.
\end{abstract}

\section{Introduction}
We consider the following Euler-Riesz system for $t \geq 0 $ and $x\in \mathbb{R}^d$ (with $d \in \mathbb{N}$):
\begin{equation} \label{ER0}
\begin{cases}
\partial_t \rho + \nabla \cdot (\rho u) = 0, \\
\partial_t (\rho u) + \nabla \cdot (\rho u \otimes u) +\nabla \rho^\gamma +  \rho \nabla K_\alpha \ast \rho = 0, 
\end{cases}
\end{equation}
where $\rho : [0,\infty) \times \mathbb{R}^d \to [0,\infty)$ denotes a density,  $u : [0,\infty) \times \mathbb{R}^d \to \mathbb{R}^d$ stands for the velocity and the exponent $\gamma$ is greater than $1$.
The kernel $K_\alpha$ is given by
\begin{equation} \label{kernelK}
K_\alpha(x) = \tfrac{1}{d - \alpha}|x|^{\alpha - d} 
\end{equation}
with $0 < \alpha < d$;  the term $\rho \nabla K_\alpha \ast \rho$ describes the nonlocal repelling interaction of particles. Smooth solutions $(\rho,u)$ of (\ref{ER0}) 
decaying sufficiently fast at infinity satisfy the conservation of energy and mass identities:
\begin{equation}\label{apbounds}
\frac{\mathrm{d}}{\mathrm{d}t} \int_{\mathbb{R}^d} \tfrac{1}{2}\rho |u|^2 + \tfrac{1}{\gamma - 1} \rho^\gamma + \tfrac{1}{2} \rho (K_\alpha \ast \rho) \, \mathrm{d}x = 0, \qquad \frac{\mathrm{d}}{\mathrm{d}t} \int_{\mathbb{R}^d} \rho \, \mathrm{d}x = 0 \, .
\end{equation}
This, in particular, yields an a priori estimate for weak solutions, which implies the regularity $ \rho \in L^\infty\big((0,\infty);L^1 \cap L^\gamma(\mathbb{R}^d)\big)$ for the density.
\par 
In this work, we  exploit an intriguing connection between harmonic analysis and the theory of  Euler-Riesz systems hinging on
the study of  a bilinear fractional integral operator.
The approach is based on a reformulation of the interaction term in divergence form, as seen in \eqref{divSalpha} below, in conjunction with 
uniform bounds for an associated bilinear fractional integral operator that are established here.
This reformulation is advantageous for three reasons: 
(i)  On the one hand, all the terms of the equations are written in divergence form, 
allowing the derivatives to be absorbed by the test functions in a weak formulation.
(ii) The harmonic analysis estimates lead to integrability properties of the nonlocal interaction term.
(iii) Finally, for finite energy solutions, it provides a higher integrability estimate for the density in space-time, achieved by applying the compensated integrability theory for divergence-free positive symmetric tensors \cite{serre2018divergence, serre2019compensated, serre2023mixed} to the setting of Euler-Riesz systems. 
 \par
To illustrate, note that the only term of (\ref{ER0}) that is not in divergence form is the interaction term $\rho \nabla K_\alpha \ast \rho$. Inspired by a calculation in \cite{serre2019compensated} and exploiting the symmetry of the kernel $K_\alpha$, one reaches the  identity
\begin{equation} \label{divSalpha}
 \rho \nabla K_\alpha \ast \rho = \nabla \cdot S_\alpha(\rho)  
\end{equation}
where $S_\alpha(\rho)$ is a tensor defined by 
\begin{equation} \label{tensorS}
S_\alpha(\rho)(t,x) =  \tfrac{1}{2} \int_0^1  \int_{\mathbb{R}^d} \rho(t, x + (\theta -1)y) \, \rho(t, x + \theta y) \, |y|^{\alpha -d-2 } \, y\otimes y \, \rd y \, \rd \theta.
\end{equation}
Identity (\ref{divSalpha}) is derived in Appendix \ref{appendix1} and yields a reformulation of (\ref{ER0}) in which the equations are expressed as a divergence-free condition of a tensor that fits into the compensated integrability framework of \cite{serre2018divergence}. 
This reformulation leads, in turn,  to a higher integrability estimate for finite energy solutions  thereby improving on the integrability 
provided by the energy identity; see Theorem \ref{higherintegrability}. 
\par 
To analyze $S_\alpha(\rho)$, we consider a bilinear fractional integral operator $I_\alpha^\theta$ defined for  nonnegative measurable functions $f$ and $g$ on $\mathbb{R}^d$ by
\begin{equation} \label{integraloperator}
I_\alpha^\theta(f,g)(x) = \int_{\mathbb{R}^d} f( x + (\theta -1)y) \, g (x + \theta y) \, |y|^{\alpha -d } \, \mathrm{d}y
\end{equation} 
with $0 < \alpha < d$ and $0 \leq  \theta \leq 1$. 
The main result of this work provides a uniform bound in $\theta$ for $I_\alpha^\theta$ with assumptions similar in style to the classical Hardy-Littlewood-Sobolev (HLS) inequality; see Theorem \ref{uniformestimate}.   
\par
From the natural integrability of $\rho$ induced by the energy identity, one may deduce using the classical HLS inequality that the term 
$\rho \nabla K_\alpha \ast \rho$ belongs to $L^1$ in space whenever $1 < \alpha < d$.  By contrast, when employing the 
formulation \eqref{divSalpha} via the tensor $S_\alpha(\rho)$, one improves the  range to $0 < \alpha < d$.
This observation underlines the importance of the reformulation of (\ref{ER0}) through identity (\ref{divSalpha}). 
\par
The paper is organized as follows. In Section \ref{section_results} we state the main theorem of this work and describe the associated results. In Section \ref{section_CI} we explain how the theory of compensated integrability leads to a higher integrability estimate for finite energy solutions of the Euler-Riesz system. Section \ref{section_HA} contains the proof of the main theorem and its corollary, Proposition \ref{Jalphaestimate}, which yields an integrability result for the tensor given by (\ref{tensorS}). Finally, in Section \ref{section_ER}, we establish a stability result for smooth periodic solutions of the reformulated Euler-Riesz system via the relative energy method.

\section{Description of results} \label{section_results}
The main theorem of this work provides a uniform estimate for the bilinear fractional operator $I_\alpha^\theta$ given by (\ref{integraloperator}); see Theorem \ref{uniformestimate} below. \par 
Fractional integral operators have been of great importance  in harmonic analysis for several decades; however, in recent years, their bilinear analogues have also attracted research attention. In particular, an operator $B_\alpha$, with $0 < \alpha < d$, acting on nonnegative measurable functions of $\mathbb{R}^d$ as
\[B_\alpha(f,g) = \int_{\mathbb{R}^d} f( x  - y) \, g (x + y) \, |y|^{\alpha -d } \, \mathrm{d}y \]
was first considered in \cite{grafakos1992multilinear} and later in \cite{kenig1999multilinear, grafakos2001some}, in which optimal boundedness properties between Lebesgue spaces were established. It has subsequently been studied extensively by several authors; we refer to \cite{ding2002rough, moen2014new, li2016two, hoang2018weighted, hatano2019note, furuya2020weighted, he2021bilinear} for estimates concerning $B_\alpha$ (and related versions) on a variety of spaces.
While the operator $I_\alpha^\theta$ is quite similar to $B_\alpha$ when the dependence on the parameter $\theta$ is ignored, its study becomes more intricate when seeking estimates that are uniform in the auxiliary parameter $\theta$.
 \par 
These types of operators have sparked significant interest primarily due to the singular nature of their integrands, but also due to their proximity to Hilbert transforms. Notable examples include the linear fractional integral operator (also known as the Riesz potential) and the linear Hilbert transform. In our case, the bilinear operator $I_\alpha^\theta$ is related to a bilinear Hilbert transform  $H^\theta$, given by
\[H^\theta(f,g)(x) = \text{p.v.} \int_\mathbb{R} f(x + (\theta - 1)t) \, g(x + \theta t) \frac{\mathrm{d}t}{t}. \]
Uniform bounds in $\theta$ for this transform can be deduced by direct application of the results obtained in \cite{grafakos2004uniform, liu2006uniform}. Other boundedness results for similar bilinear Hilbert transforms can be found in \cite{lacey1997p,lacey1999calderon}. \par 
\subsection{Main result}
\begin{theorem} \label{uniformestimate} 
Let $d\in \mathbb{N}$ be the dimension,  $0<\alpha<d$, and $p,q,r$ be integrability exponents satisfying 
\[1 < p,q < \dfrac{d}{\alpha} \quad \text{and} \quad \dfrac{1}{p}+ \dfrac{1}{q} = \dfrac{1}{r} + \dfrac{\alpha}{d}. \] 
Then there is a constant   $C = C(\alpha, d, p ,q) > 0$ independent of $\theta$ such that  for all $f \in L^p(\mathbb{R}^d)$ and $g \in L^q(\mathbb{R}^d)$ we have
 \begin{equation}\label{uestimate}
 \|  I_\alpha^\theta(f,g) \|_{L^r(\mathbb{R}^d)} \leq C \, \| f\|_{L^p(\mathbb{R}^d)} \, \|g \|_{L^q(\mathbb{R}^d)}.
 \end{equation}
\end{theorem}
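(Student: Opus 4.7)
The plan is to establish uniform restricted weak-type endpoint estimates at the four corners of the closed region $[\alpha/d, 1]^2$ in the $(1/p, 1/q)$-plane and then invoke multilinear Marcinkiewicz interpolation to produce strong-type bounds throughout the open interior. The key preparatory observation is that the change of variables $(x,y) \mapsto (u,v) = (x+(\theta-1)y,\, x+\theta y)$ has Jacobian $1$, so the dual form of $I_\alpha^\theta$ reads
\[
\langle I_\alpha^\theta(f,g),\, h \rangle \;=\; \iint f(u)\, g(v)\, h(\theta u + (1-\theta) v)\, |v-u|^{\alpha-d}\, du\, dv,
\]
exhibiting the symmetry $I_\alpha^\theta(f,g) = I_\alpha^{1-\theta}(g,f)$ and isolating the $\theta$-dependence inside the convex combination $\theta u + (1-\theta) v$; this is the setting in which each endpoint bound's $\theta$-independence can be checked directly.

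For the two off-diagonal corners $(p,q,r) = (1, d/\alpha, 1)$ and $(d/\alpha, 1, 1)$ (related by the symmetry above), I would fix $u \in E$ and substitute $w = \theta u + (1-\theta) v$ in the $v$-integral; the inner integral becomes $(1-\theta)^{-\alpha} \int_{(\theta u + (1-\theta)F) \cap G} |w-u|^{\alpha-d}\, dw$, and combining the classical rearrangement bound $\int_B |w|^{\alpha-d}\, dw \leq C |B|^{\alpha/d}$ with $|B| \leq (1-\theta)^d |F|$ exactly cancels the $(1-\theta)^{-\alpha}$ prefactor, yielding $\|I_\alpha^\theta(\chi_E, \chi_F)\|_{L^1} \leq C|E||F|^{\alpha/d}$ uniformly in $\theta$. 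For the diagonal corner $(d/\alpha, d/\alpha, d/\alpha)$, setting $A(x) := \{y : x+(\theta-1)y \in E,\ x+\theta y \in F\}$ and using that $|y|^{\alpha-d}$ is radially decreasing produces the pointwise bound $I_\alpha^\theta(\chi_E,\chi_F)(x) \leq C|A(x)|^{\alpha/d}$; the same Jacobian-$1$ change of variables shows $\int |A(x)|\,dx = |E||F|$ uniformly, so raising to the $d/\alpha$-power and integrating gives the restricted strong-type bound $\|I_\alpha^\theta(\chi_E, \chi_F)\|_{L^{d/\alpha}} \leq C(|E||F|)^{\alpha/d}$.

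I expect the main obstacle to be the fourth corner $(1, 1, d/(2d-\alpha))$, where $r<1$ makes the target space quasi-Banach. Here the identity $|A(x)| = (\theta(1-\theta))^{-d}(\chi_{(1-\theta)F} * \chi_{\theta E})(x)$ is useful: although the prefactor is $\theta$-singular, the scaled sets $\theta E$ and $(1-\theta) F$ shrink to compensate, and a careful weak-type Young inequality applied to the convolution, combined with the pointwise rearrangement $I_\alpha^\theta(\chi_E,\chi_F) \leq C|A|^{\alpha/d}$, delivers the uniform restricted weak-type bound $\|I_\alpha^\theta(\chi_E, \chi_F)\|_{L^{d/(2d-\alpha),\infty}} \leq C|E||F|$. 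With restricted weak-type bounds at all four corners uniform in $\theta$, multilinear Marcinkiewicz interpolation then yields the strong-type estimate (\ref{uestimate}) throughout the open interior $1<p,q<d/\alpha$ with a constant independent of $\theta$.
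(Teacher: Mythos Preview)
Your overall strategy---restricted weak-type estimates at the four corners of $[\alpha/d,1]^2$ followed by multilinear Marcinkiewicz interpolation---is exactly the paper's, and your direct arguments at the three corners $(1,\tfrac{d}{\alpha},1)$, $(\tfrac{d}{\alpha},1,1)$, $(\tfrac{d}{\alpha},\tfrac{d}{\alpha},\tfrac{d}{\alpha})$ are correct (in fact cleaner than the paper's dyadic treatment of those same corners). The gap is precisely at the corner you flagged as the obstacle, $(1,1,\tfrac{d}{2d-\alpha})$.

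The combination of the pointwise rearrangement $I_\alpha^\theta(\chi_E,\chi_F)\le C|A|^{\alpha/d}$ with \emph{any} Young-type bound on $|A|=(\theta(1-\theta))^{-d}\chi_{(1-\theta)F}*\chi_{\theta E}$ cannot produce the required weak-$L^{d/(2d-\alpha)}$ estimate: you would need control of $|A|$ in weak-$L^{\alpha/(2d-\alpha)}$, an exponent strictly below $1$, and convolution of $L^1$ functions never lands there. Concretely, take $\theta=\tfrac12$ and $E=F$ a union of $N$ well-separated unit balls; then $|A|^{\alpha/d}\sim 1$ on the Minkowski sum $(E+F)/2$, which has measure $\sim N^2$, so $\||A|^{\alpha/d}\|_{L^{d/(2d-\alpha),\infty}}\gtrsim N^{2(2d-\alpha)/d}\gg N^2=|E||F|$. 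The true operator is much smaller on the off-diagonal pieces of that support because $A(x)$ then sits far from the origin in the $y$-variable and $|y|^{\alpha-d}$ is small, but the rearrangement has already discarded that decay. The paper supplies the missing ingredient through a dyadic decomposition $I_\alpha^\theta\lesssim\sum_j 2^{(\alpha-d)j}I_j^\theta$ together with a uniform $L^1\times L^1\to L^{1/2}$ bound for the truncated operators $I_j^\theta$ (integration over $|y|\le 2^j$), proved by localizing $f$ and $g$ to unit cubes and using the finite overlap of the resulting supports; this $L^{1/2}$ estimate is what makes the quasi-Banach endpoint, and hence the whole region $r<1$, accessible.
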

\par  
Whenever  $(1/p,1/q)$ lies in the interior of the square with vertices 
$(\alpha/d, \alpha/d)$, 
$(\alpha/d, 1)$, 
$(1,\alpha/d)$, and 
$(1,1)$, then $I_\alpha^\theta$ 
is bounded from $L^p(\mathbb R^d)
\times  L^q(\mathbb R^d)$ to $L^r(\mathbb R^d)$
{\it uniformly  in $\theta$} when $1/p+1/q=1/r+\alpha/d$.
If one ignores the uniform bounds in $\theta$, then 
$I_\alpha^\theta$ is bounded from $L^p(\mathbb R^d)
\times  L^q(\mathbb R^d)$ to $L^r(\mathbb R^d)$ when the pair $(1/p,1/q)$ lies in the interior of the pentagon with vertices
$(0,\alpha/d )$,
$(0,1)$,
$(1,1)$
$(1,0)$, and 
$(\alpha/d,0)$.
See Figure~\ref{Indices}. 

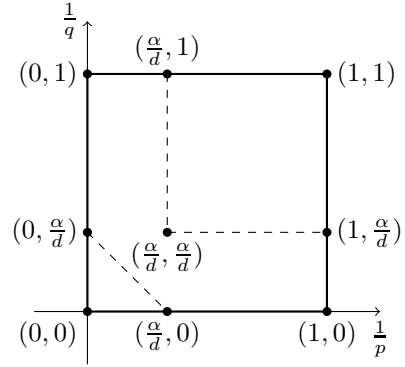
\begin{wrapfigure}{R}{.42\textwidth} 
\centering 
\begin{tikzpicture}[scale=.35, point/.style={fill, circle, inner sep=1.2pt}]
\draw[->] (0,-2) -- (0,11) node[left] {$\frac 1 q$}; 
\draw[->] (-2,0) -- (11,0) node[below] {$\frac 1 p$}; 

\draw[thick] (0,0)  -- 
	(9,0) node[point]{} node[below]{$(1,0)$} -- 
	(9,9) node[point]{} node[right]{$(1,1)$} -- 
	(0,9) node[point]{} node[left]{$(0,1)$} -- 
	(0,0) node[point]{} node[below left]{$(0,0)$}; 

\draw[dashed] (0,3) node[point]{} node[left]{$(0, \frac \alpha d)$} -- 
	(3,0) node[point]{} node[below]{$(\frac \alpha d,0)$}; 

\draw[dashed] (3,9) node[point]{} node[above]{$(\frac \alpha d, 1)$} -- 
	(3,3) node[point]{} node[below]{$(\frac \alpha d, \frac \alpha d)$} -- 
	(9,3) node[point]{} node[right]{$(1, \frac \alpha d)$}; 
\end{tikzpicture}
\caption{Region of boundedness}\label{Indices}
\end{wrapfigure}

 The proof of Theorem~\ref{uniformestimate} relies on a bilinear version of the Marcinkiewicz interpolation method, where from a finite set of restricted weak-type estimates, one deduces strong-type estimates; see Proposition \ref{Minterpolation}. A more general version of this method, for multilinear operators, was established in \cite{grafakos2001some, grafakos2012multilinear}. In particular, in \cite{grafakos2001some}, this method is deduced as a corollary of a Boyd interpolation theorem in a framework of quasi-normed rearrangement-invariant spaces. 
\par
Finally, we would like to point out that the largest possible region in which uniform estimates hold for $I_\alpha^\theta$ is, in fact, the open square with vertices $(\alpha/d, \alpha/d)$, $(\alpha/d, 1)$, $(1,\alpha/d)$, and $(1,1)$.   
In fact, by interpolation, it suffices to verify that 
uniform bounds fail  on  the boundary of this square.
To verify this assertion, let us assume that a uniform bound  
\[
\sup_{0<\theta< 1} \| I_\alpha^\theta(f, g) \|_{L^r(\mathbb R^d)} 
\le C\, \| f  \|_{L^p(\mathbb R^d)} 
 \| g \|_{L^{\frac{d}{\alpha}} (\mathbb R^d)} 
\]
holds on the horizontal dotted line, that is, when 
$1/p+1/q = 1/r+\alpha/d$ and $q=d/\alpha$, for some positive constant $C = C(\alpha,d)$. In this case, we must have $p=r$. By Fatou's lemma, it follows that 
\begin{equation*}
\|\liminf_{\theta\to 1}  I_\alpha^\theta(f, g) \|_{L^p(\mathbb R^d)} 
\le C \, \| f  \|_{L^p(\mathbb R^d)} 
 \| g \|_{L^{\frac{d}{\alpha}} (\mathbb R^d)} 
\end{equation*}
hence, for all Schwartz functions $f$ and $g$ we must have 
\begin{equation}\label{new1}
\| f I_\alpha (g ) \|_{L^p(\mathbb R^d)} 
\le C \, \| f  \|_{L^p(\mathbb R^d)} 
 \| g \|_{L^{\frac{d}{\alpha}} (\mathbb R^d)}
\end{equation}
where $I_\alpha$ is the fractional integral operator
\[
I_\alpha (g)(x) = \int_{\mathbb{R}^d} g( x +y)   \, |y|^{\alpha -d } \, \rd y  
= \int_{\mathbb{R}^d} g( x -y)   \, |y|^{\alpha -d } \, \rd y.
\]
Now inserting $f(x) = f_{\epsilon,x_0} (x) = (1/\epsilon)^{\frac d2}
e^{-\frac \pi \epsilon |x-x_0|^2} $ in \eqref{new1} and letting $
\epsilon\to 0$, we obtain 
\[
|I_\alpha(g)(x_0)| \le C \,
 \| g \|_{L^{\frac{d}{\alpha}} (\mathbb R^d)} 
\]
for all $x_0\in \mathbb R^d$. 
This would imply that $I_\alpha $ maps $L^{\frac{d}{\alpha}} (\mathbb R^d)$ to $L^\infty(\mathbb R^d)$, a fact known to be false; see
\cite[Example 5.1.4]{grafakos2024fundamentals}. An analogous argument (letting 
$\theta \to 0$) indicates that a uniform bound also cannot  hold on the vertical dotted line of Figure~\ref{Indices}.

\subsection{Connections with the HLS inequality} \label{section_HLS}
~\par
Recall the HLS inequality \cite{lieb2001analysis}:
\begin{proposition} \label{HLSprop}
Let $p,q > 1$ and $0 < \alpha < d$ satisfy 
\[\frac{1}{p} + \frac{1}{q} = 1 + \frac{\alpha}{d}. \]
 If $f \in L^p(\mathbb{R}^d)$ and $g \in L^q(\mathbb{R}^d)$, then 
\begin{equation} \label{HLS}
\left| \int_{\mathbb{R}^d} \int_{\mathbb{R}^d} f(x) |x - y|^{\alpha - d} g(y) \, \rd x \, \rd y \right| \leq C \, \|f \|_{L^p(\mathbb{R}^d)} \, \|g \|_{L^q(\mathbb{R}^d)}
\end{equation}
for some $C = C(\alpha, d, p) > 0$.
\end{proposition}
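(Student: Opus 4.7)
The plan is to establish \eqref{HLS} by duality from a strong-type mapping property of the linear Riesz potential $I_\alpha g(x) = \int_{\mathbb{R}^d} |x - y|^{\alpha - d} g(y) \, \rd y$. Since the bilinear pairing in \eqref{HLS} is exactly $\int_{\mathbb{R}^d} f(x) I_\alpha g(x) \, \rd x$, Hölder's inequality reduces the claim to showing $\|I_\alpha g\|_{L^{p'}(\mathbb{R}^d)} \leq C \|g\|_{L^q(\mathbb{R}^d)}$. Writing $s = p'$, the scaling relation $1/p + 1/q = 1 + \alpha/d$ becomes $1/q - 1/s = \alpha/d$, while $1 < p < \infty$ translates to $1 < s < \infty$.

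Next I would establish this linear bound via Hedberg's pointwise inequality. For $R > 0$, I split $I_\alpha g(x)$ into a near part over $|y| \leq R$ and a far part over $|y| > R$. The near part is estimated by decomposing dyadically into annuli $\{2^{-k-1}R < |y| \leq 2^{-k}R\}$, on each of which $|y|^{\alpha - d}$ is comparable to $(2^{-k}R)^{\alpha - d}$; using the definition of the Hardy-Littlewood maximal function $Mg$ and summing the resulting geometric series yields a bound of the form $C R^\alpha Mg(x)$. The far part is controlled by Hölder's inequality, noting that $|y|^{(\alpha - d)q'}$ is integrable outside the ball of radius $R$ precisely because the hypotheses $p, q > 1$ combined with $1/p + 1/q = 1 + \alpha/d$ force $q < d/\alpha$; this produces a bound of the form $C R^{\alpha - d/q} \|g\|_{L^q}$. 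Balancing the two contributions by choosing $R^{d/q} \sim \|g\|_{L^q}/Mg(x)$ gives
\[
I_\alpha g(x) \leq C \, (Mg(x))^{1 - \alpha q/d} \, \|g\|_{L^q(\mathbb{R}^d)}^{\alpha q/d}.
\]

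To conclude, I would raise this pointwise bound to the $s$-th power and integrate; the scaling relation $1/q - 1/s = \alpha/d$ forces the algebraic identity $s(1 - \alpha q/d) = q$, so the right-hand side becomes $C \|g\|_{L^q}^{s \alpha q/d} \|Mg\|_{L^q}^{q}$. The Hardy-Littlewood maximal inequality $\|Mg\|_{L^q} \leq C_q \|g\|_{L^q}$, applicable since $q > 1$, then yields the desired estimate $\|I_\alpha g\|_{L^s} \leq C \|g\|_{L^q}$. The main obstacle is really only careful bookkeeping of exponents; the hypothesis $p, q > 1$ plays a double role, ensuring simultaneously that $s = p' < \infty$ and that the maximal operator is strong-type $(q, q)$. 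The boundary cases $p = 1$ or $q = d/\alpha$ admit only weak-type analogues and are correctly excluded by the stated hypotheses.
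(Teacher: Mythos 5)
Your argument is correct and is the classical Hedberg proof of the Hardy--Littlewood--Sobolev inequality: one first reduces, via H\"older duality, to the strong-type bound $\|I_\alpha g\|_{L^{p'}}\le C\|g\|_{L^q}$ for the Riesz potential, and then obtains this by splitting $I_\alpha g(x)=\int g(x-y)|y|^{\alpha-d}\,\rd y$ at scale $R$, bounding the near part by $CR^\alpha Mg(x)$ via a dyadic annulus decomposition, bounding the far part by $CR^{\alpha-d/q}\|g\|_{L^q}$ via H\"older (using $q<d/\alpha$, which indeed follows from $p>1$ and the scaling relation), and optimizing over $R$ to arrive at the pointwise bound $I_\alpha g\le C(Mg)^{1-\alpha q/d}\|g\|_{L^q}^{\alpha q/d}$. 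The exponent bookkeeping you carry out, namely $s(1-\alpha q/d)=q$ when $1/q-1/s=\alpha/d$, is exactly right, and the Hardy--Littlewood maximal theorem (valid since $q>1$) closes the argument. One small notational point: you should split over $|x-y|\le R$ versus $|x-y|>R$ (or change variables to $g(x-y)$ first), rather than over $|y|\le R$ as literally written, but this is clearly what you intend.

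The paper itself does not prove Proposition~\ref{HLSprop}; it simply cites the statement from Lieb and Loss \cite{lieb2001analysis}. The proof in that reference proceeds quite differently, through the layer cake formula and the Riesz rearrangement inequality, and in fact yields the inequality with the sharp constant in the diagonal case. Your Hedberg-style proof is more elementary, self-contained, and avoids rearrangement theory, at the cost of producing a non-sharp constant. For the purposes of this paper, where only a finite constant $C(\alpha,d,p)$ is needed, either route suffices, and your approach has the pedagogical virtue of foreshadowing the decomposition-into-dyadic-scales strategy used in Section~\ref{section_HA} for the bilinear operator $I_\alpha^\theta$.
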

Note that the assumptions on the integrability exponents $p$ and $q$ in Theorem \ref{uniformestimate} with $r = 1$ and Proposition \ref{HLSprop} are exactly the same. Indeed, the assumptions of Proposition \ref{HLSprop} imply that $p,q < d/\alpha$. To check this fact, suppose, without loss of generality, that $p > 1$ and $q \geq d/\alpha$. Then $1/p + 1/q < 1 + \alpha/d$,
which is a contradiction. \par 
Furthermore, the HLS inequality can be used to infer the $L^1$ boundedness of the operator $I_\alpha^\theta$, since for nonnegative measurable functions $f$ and $g$, appropriate changes of variables yield 
\[\|I_\alpha^\theta(f,g) \|_{L^1{(\mathbb{R}^d})} = \int_{\mathbb{R}^d} \int_{\mathbb{R}^d} f(x) |x - y|^{\alpha - d} g(y) \, \rd x \, \rd y. \]
Thus, the uniform estimate (\ref{uestimate}) is particularly important if $r \neq 1$.

\subsection{A tensorial bilinear fractional integral operator}
~\par
Consider a bilinear version of the tensor $S_\alpha(\rho)$, that is, define a tensorial bilinear fractional integral operator $J_\alpha$ for nonnegative measurable functions $f$ and $g$ on $\mathbb{R}^d$ by 
\begin{equation} \label{Jalpha}
J_\alpha(f,g)(x) = \int_0^1 \int_{\mathbb{R}^d} f( x + (\theta -1)y) \, g (x + \theta y) \, |y|^{\alpha -d -2 } \, y \otimes y \, \mathrm{d}y \, \mathrm{d}\theta.
\end{equation}
Note that $S_\alpha(\rho) = \tfrac{1}{2}J_\alpha(\rho,\rho)$. Additionally, identity $(\ref{divSalpha})$ can be written in terms of $J_\alpha$ as 
\begin{equation} \label{tensor_relation}
f \nabla K_\alpha \ast f = \nabla \cdot \left( \tfrac{1}{2} J_\alpha(f,f) \right). 
\end{equation}
As a consequence of Theorem \ref{uniformestimate}, for the operator $J_\alpha$ we obtain the following result:
\begin{proposition} \label{THLS}
Let   $0<\alpha<d$  and $p,q,r$ be integrability exponents satisfying 
 \[1 < p,q < \dfrac{d}{\alpha}, \quad  r \geq 1, \quad \text{and} \quad \dfrac{1}{p}+ \dfrac{1}{q} = \dfrac{1}{r} + \dfrac{\alpha}{d}. \] Then for all 
  $f \in L^p(\mathbb{R}^d)$ and $g \in L^q(\mathbb{R}^d)$ we have 
\begin{equation} \label{Jalphaestimate}
\|J_\alpha (f,g) \|_{L^r(\mathbb{R}^d)} \leq C \, \| f\|_{L^p(\mathbb{R}^d)} \, \|g \|_{L^q(\mathbb{R}^d)}
\end{equation} 
for some $C = C(\alpha, d,p,q) > 0$. 
\end{proposition}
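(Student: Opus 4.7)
The plan is to reduce the tensor-valued estimate to the scalar estimate of Theorem \ref{uniformestimate} by a sequence of three short steps: pointwise domination, Minkowski's integral inequality, and invocation of the uniform bound.

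First, I would observe the elementary pointwise bound $|y_i y_j| \leq |y|^2$ for every pair $(i,j) \in \{1,\dots,d\}^2$. Applied inside the integrand of \eqref{Jalpha}, this shows that each scalar entry of $J_\alpha(f,g)(x)$ is dominated in absolute value by
\[
\int_0^1 \int_{\mathbb{R}^d} f(x+(\theta-1)y)\, g(x+\theta y)\, |y|^{\alpha - d}\, \rd y \, \rd \theta \; = \; \int_0^1 I_\alpha^\theta(f,g)(x)\, \rd\theta,
\]
where $I_\alpha^\theta$ is the scalar operator in \eqref{integraloperator}. Choosing any fixed norm on $d \times d$ matrices (equivalent up to a dimensional constant), this reduces Proposition \ref{THLS} to an $L^r$ bound for the $\theta$-averaged scalar quantity above.

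Second, since $r \geq 1$, I would apply Minkowski's integral inequality in the $\theta$-variable to pass the $L^r(\mathbb{R}^d)$ norm inside the integral:
\[
\left\| \int_0^1 I_\alpha^\theta(f,g)\, \rd\theta \right\|_{L^r(\mathbb{R}^d)} \;\leq\; \int_0^1 \| I_\alpha^\theta(f,g) \|_{L^r(\mathbb{R}^d)}\, \rd\theta.
\]
Third, the hypotheses on $(p,q,r,\alpha,d)$ in Proposition \ref{THLS} coincide exactly with those of Theorem \ref{uniformestimate}, so the integrand is bounded by $C(\alpha,d,p,q)\, \|f\|_{L^p}\|g\|_{L^q}$ uniformly in $\theta \in [0,1]$. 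Integrating over $[0,1]$ and summing over matrix entries (absorbing a factor of $d^2$ into the constant) yields \eqref{Jalphaestimate}.

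There is no serious obstacle here: the whole statement is a corollary of the uniform bound. The assumption $r \geq 1$ is used in, and only in, Minkowski's integral inequality, while the sharper conditions $1 < p,q < d/\alpha$ and $1/p + 1/q = 1/r + \alpha/d$ are precisely what Theorem \ref{uniformestimate} needs; the role of the auxiliary factor $y \otimes y / |y|^2$ is to convert $|y|^{\alpha - d - 2}|y|^2 = |y|^{\alpha - d}$, thereby matching the singularity handled by Theorem \ref{uniformestimate}.
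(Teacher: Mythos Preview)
Your proof is correct and follows essentially the same route as the paper: pointwise domination of $J_\alpha$ by $\int_0^1 I_\alpha^\theta\,\rd\theta$, then passing the $L^r$ norm through the $\theta$-integral (you use Minkowski's integral inequality, the paper uses Jensen's inequality plus Fubini, both relying on $r\ge 1$), and finally invoking the uniform-in-$\theta$ bound of Theorem~\ref{uniformestimate}. The only cosmetic difference is that the paper obtains the pointwise bound via the operator norm of the matrix rather than entrywise, which avoids the factor $d^2$ but is otherwise equivalent.
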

The previous proposition leads to an integrability result for $S_\alpha(\rho)$. Indeed, if 
\[\rho \in L^1 (\mathbb{R}^d) \cap  L^\gamma(\mathbb{R}^d)\]
-- as implied by the a priori bounds \eqref{apbounds} -- and $ 1 < 2dr/(d+\alpha r) \leq \gamma$, for some $r \geq 1$, then
$S_\alpha(\rho) \in L^r (\mathbb{R}^d)$ and there exists a constant $C = C(\alpha, d, r) > 0$ such that 
\begin{equation} \label{Sestimate}
\|S_\alpha (\rho) \|_{L^r(\mathbb{R}^d)} \leq C \, \|\rho \|_{L^p (\mathbb{R}^d)}^2 
\end{equation} 
where $p = 2dr/(d+\alpha r)$. Note that by interpolation, the right hand side of (\ref{Sestimate}) is controlled by the norm $\| \rho \|_{L^1(\mathbb{R}^d)} + \| \rho \|_{L^\gamma(\mathbb{R}^d)}$.

\subsection{Reformulation of the Euler-Riesz system} \label{section_motivation}
~\par
Consider the Euler-Riesz system (\ref{ER0}) supplemented with initial data $\rho_0$ and $u_0$. This system comprises a continuity equation for the conservation of mass and a second equation that ensures the conservation of momentum. These equations govern the dynamics of a compressible fluid with density $\rho$ and linear velocity $u$, subject to pressure and interaction forces. The pressure function is given by $ p(\rho)=\rho^\gamma,$ with $\gamma > 1$ being the adiabatic exponent, and the interaction forces are modelled through the kernel $K_\alpha$ given by (\ref{kernelK}). For $d \geq 3$ and $\alpha = 2$, we recover the Euler-Poisson equations, as in that case, the interaction kernel $K_2$ is the Newtonian kernel. For existence theories on Euler-Riesz systems, we refer to \cite{choi2022well, danchin2022global}.
\par 
As observed in the introduction, smooth solutions of \eqref{ER0} satisfy a priori bounds of conservation of energy and mass. Given the adiabatic exponent, it is reasonable to consider solutions such that $\rho$ belongs to $L^1 \cap L^\gamma$ in space. A natural question is whether this integrability can be improved by exploiting the structure of the equations. For finite energy solutions this can be accomplished by compensated integrability. Specifically, for a finite energy solution $(\rho,u)$ of (\ref{ER0}), one can prove that for each $T > 0$
\begin{equation} \label{integrabilityimprovement}
\text{if} \quad  \rho \in L^\infty\big(0,T; L^1(\mathbb{R}^d) \cap L^\gamma(\mathbb{R}^d) \big) \quad \text{then} \quad \rho \in L^{\gamma + \frac{1}{d}}\big( (0,T) \times \mathbb{R}^d \big).
\end{equation}
\par
The first step towards (\ref{integrabilityimprovement}) is to rewrite system (\ref{ER0}) as a space-time divergence-free condition for an appropriate tensor. This is made possible through identity (\ref{divSalpha}). This reformulates system (\ref{ER0}) into a divergence-free positive symmetric tensor form,
 fitting in the compensated integrability theory of \cite{serre2018divergence, serre2019compensated, serre2023mixed}, thereby yielding the integrability improvement (\ref{integrabilityimprovement}); see Section \ref{section_CI}. We refer to \cite{guerra2024compensation} for an extension of this theory, and to \cite{lefloch2007finite} where a higher integrability estimate is obtained for one-dimensional finite energy solutions of an isentropic Euler system using a different methodology.
\par 
Next, we explore a possible weak formulation for the Euler-Riesz system (\ref{ER0}). For the continuity equation take
\begin{equation*} 
\int_0^\infty \int_{\mathbb{R}^d} \rho  \partial_t \varphi + \rho u \cdot \nabla  \varphi\, \mathrm{d}x  \, \mathrm{d}t + \int_{\mathbb{R}^d} \rho_0 \varphi_0 \, \mathrm{d}x = 0
\end{equation*}
where $\varphi \in C^1_c([0,\infty) \times \mathbb{R}^d)$ is a test function with $\varphi|_{t=0} = \varphi_0$. For the momentum equation we have two options, according to identity (\ref{divSalpha}). Let $\xi \in C^1_c([0,\infty) \times \mathbb{R}^d; \mathbb{R}^d)$ be a test function with $\xi|_{t=0} = \xi_0$. Using the left-hand side of (\ref{divSalpha}), we get
\begin{equation} \label{weakform21}
\int_0^\infty \int_{\mathbb{R}^d} \rho u \cdot \partial_t \xi + (\rho u \otimes u +\rho^\gamma I_d) : \nabla \xi -  \rho \nabla (K_\alpha \ast \rho) \cdot \xi \, \mathrm{d}x \, \mathrm{d}t + \int_{\mathbb{R}^d} \rho_0 u_0 \cdot \xi_0 \, \mathrm{d}x = 0,
\end{equation}
whereas, using the right-hand side of (\ref{divSalpha}), we have
\begin{equation} \label{weakform22}
\int_0^\infty \int_{\mathbb{R}^d} \rho u \cdot \partial_t \xi + \big(\rho u \otimes u +\rho^\gamma I_d +   S_\alpha(\rho) \big) : \nabla \xi \, \mathrm{d}x \, \mathrm{d}t + \int_{\mathbb{R}^d} \rho_0 u_0 \cdot \xi_0 \, \mathrm{d}x = 0,
\end{equation}
where $I_d$ is the $d\times d$ identity matrix, and for two square matrices $A=(a_{ij})$ and $B=(b_{ij})$, $A : B = \sum_{i,j} a_{ij} b_{ij}$. \par
Assume that $\rho \in L^1 \cap L^\gamma(\mathbb{R}^d)$.  Using the  HLS inequality we obtain:
\begin{enumerate}[(i)]
\item If $1 < \alpha < d$ and $\gamma \geq q = 2d/(d+\alpha -1)$, then $\rho \nabla K_\alpha \ast \rho \in L^1(\mathbb{R}^d)$ since \[\|\rho \nabla K_\alpha \ast \rho \|_{L^1(\mathbb{R}^d)} \leq C(\alpha, d) \, \|\rho \|_{L^{q}(\mathbb{R}^d)}^2.\]
\item If $0 < \alpha < d$ and $\gamma \geq p = 2d/(d+\alpha)$, then $S_\alpha(\rho) \in L^1(\mathbb{R}^d)$ since \[\|S_\alpha(\rho) \|_{L^1(\mathbb{R}^d)} \leq C(\alpha, d) \, \|\rho \|_{L^{p}(\mathbb{R}^d)}^2.\]
\end{enumerate}
The second formulation is preferable as it is well-defined for a larger range of the parameters $\alpha $ and $\gamma$.\par 

\section{Compensated integrability}\label{section_CI}
In this section we provide a proof for (\ref{integrabilityimprovement}), which is one of the reasons for having considered the tensor $S_\alpha(\rho)$ and subsequently the bilinear fractional integral operator $I_\alpha^\theta$. 
\subsection{A divergence-free positive symmetric tensor} \label{section_CI_DPT}
~ \par 
First, we write system (\ref{ER0}) as a space-time divergence-free condition for an appropriate tensor. Thanks to (\ref{divSalpha}), system $(\ref{ER0})$ can be reformulated into:
\begin{equation} \label{ERDPT}
\nabla_{t,x} \cdot A_\alpha(\rho,u) = 0
\end{equation}
where the $(1+d)$-tensor $A_\alpha(\rho,u)$ is given by 
\begin{equation} \label{tensorA}
A_\alpha(\rho,u) = 
\begin{bmatrix}
\rho & (\rho u)^\top  \\
\rho u & \rho u\otimes u + p(\rho)I_d + S_\alpha(\rho)  \\
\end{bmatrix}.
\end{equation}
\par 
Next, we deduce some basic properties of the tensor $S_\alpha(\rho)$ given by (\ref{tensorS}) that are relevant for the subsequent analysis. 
\begin{proposition} \label{detbounds}
The tensor $S_\alpha(\rho)$ is symmetric, positive semi-definite and 
\begin{equation*}
\det \big(p(\rho)I_d + S_\alpha(\rho) \big) \geq \begin{cases}
p(\rho)^d, \\
\det S_\alpha(\rho).
\end{cases} 
\end{equation*} 
\end{proposition}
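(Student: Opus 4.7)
The statement has three parts, all of which reduce to elementary linear algebra once the integral structure of $S_\alpha(\rho)$ is exploited; no harmonic analysis input is needed.

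\textbf{Symmetry.} The integrand of \eqref{tensorS} is a nonnegative scalar $\rho(t,x+(\theta-1)y)\,\rho(t,x+\theta y)\,|y|^{\alpha-d-2}$ times the rank-one matrix $y\otimes y$, which is symmetric for every $y\in\mathbb{R}^d$. Since the $dy\,d\theta$-integral preserves symmetry, the plan is simply to read off that $S_\alpha(\rho)$ is symmetric.

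\textbf{Positive semi-definiteness.} For any $\xi\in\mathbb{R}^d$ I will write
\begin{equation*}
\xi^\top S_\alpha(\rho)(t,x)\,\xi = \tfrac12\int_0^1\!\!\int_{\mathbb{R}^d}\rho(t,x+(\theta-1)y)\,\rho(t,x+\theta y)\,|y|^{\alpha-d-2}\,(y\cdot\xi)^2\,\rd y\,\rd\theta,
\end{equation*}
which is manifestly nonnegative because $\rho\geq 0$, $|y|^{\alpha-d-2}>0$, and $(y\cdot\xi)^2\geq0$. This is the whole content of positive semi-definiteness.

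\textbf{Determinant bound.} Here I would diagonalize $S_\alpha(\rho)$. Having established symmetry and positive semi-definiteness, the spectral theorem provides an orthonormal basis in which $S_\alpha(\rho) = \mathrm{diag}(\lambda_1,\dots,\lambda_d)$ with eigenvalues $\lambda_i\geq 0$. In that same basis $p(\rho)I_d + S_\alpha(\rho)$ is diagonal with entries $p(\rho)+\lambda_i$, so
\begin{equation*}
\det\bigl(p(\rho)I_d + S_\alpha(\rho)\bigr) \;=\; \prod_{i=1}^d \bigl(p(\rho)+\lambda_i\bigr).
\end{equation*}
Expanding this product yields a sum of nonnegative monomials in $p(\rho)$ and the $\lambda_i$'s (since $p(\rho)=\rho^\gamma\geq 0$ and $\lambda_i\geq 0$), among which appear both $p(\rho)^d$ and $\lambda_1\cdots\lambda_d=\det S_\alpha(\rho)$; hence both lower bounds follow at once. (Equivalently, one could invoke the Minkowski determinant inequality $\det(A+B)^{1/d}\geq\det(A)^{1/d}+\det(B)^{1/d}$ for positive semi-definite $A,B$, but the diagonalization argument is self-contained and avoids citing external inequalities.)

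I do not anticipate a real obstacle: the only subtlety is making sure the integrand is interpreted in the Bochner sense so that the symmetry/positivity of the matrix-valued integrand transfers to the integral, but this is standard since the scalar coefficient is nonnegative and one can test against fixed $\xi$ to reduce to scalar integration, as done above.
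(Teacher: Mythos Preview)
Your proof is correct and follows essentially the same route as the paper: symmetry from $y\otimes y$, positive semi-definiteness by computing $\xi^\top S_\alpha(\rho)\xi$ as a nonnegative integral, and the determinant bound via the eigenvalue factorization $\det(p(\rho)I_d+S_\alpha(\rho))=\prod_i(p(\rho)+\lambda_i)$ with $\lambda_i\ge 0$. The only cosmetic difference is that you phrase the last step as expanding the product into nonnegative monomials, whereas the paper simply drops one summand in each factor; these are the same argument.
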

\begin{proof}
It is clear that $S_\alpha(\rho)$ is symmetric since $y \otimes y$ is symmetric. Moreover, given a vector $v=v(x)$,
\[v^\top S_\alpha(\rho) v =  \tfrac{1}{2} \int_{\mathbb{R}^d} \int_0^1 \rho(x+(\theta - 1)y) \rho(x+\theta y) |y|^{\alpha-d - 2} (y \cdot v)^2 \, \mathrm{d}\theta \, \mathrm{d}y \geq 0\]
hence $S_\alpha(\rho)$ is positive semi-definite. Therefore, there exist nonnegative eigenvalues $\lambda_1, \ldots, \lambda_d,$ and respective eigenvectors $v_1, \ldots, v_d,$ that is, $S_\alpha(\rho) v_i = \lambda_i v_i$. Then $p(\rho) + \lambda_i$ is an eigenvalue of $p(\rho)I_d + S_\alpha(\rho)$, since $\big(p(\rho) I_d + S_\alpha(\rho) \big) v_i = \big(p(\rho) + \lambda_i \big)v_i$. Hence, given that $\lambda_i \geq 0,$
\begin{equation*}
\det \big(p(\rho)I_d + S_\alpha(\rho) \big) = \prod_{i=1}^d \big(p(\rho) + \lambda_i \big) \geq \begin{dcases}
\prod_{i=1}^d p(\rho) = p(\rho)^d, \\
\prod_{i=1}^d \lambda_i = \det S_\alpha(\rho).
\end{dcases} 
\end{equation*}
\end{proof}
\par 
It follows that the tensor $A_\alpha(\rho,u)$ is symmetric and positive semi-definite. To check the latter, let $w = (w_0, \tilde{w}),$ with $w_0$ being a scalar and $\tilde{w}$ a $d$-dimensional vector, and note that 
\begin{align*}
w^\top A w &= \begin{bmatrix}
w_0 & \tilde{w}^\top
\end{bmatrix}
\begin{bmatrix}
\rho & (\rho u)^\top  \\
\rho u & \rho u\otimes u + p(\rho)I + S_\alpha(\rho)  \\
\end{bmatrix}
\begin{bmatrix}
w_0 \\
\tilde{w}
\end{bmatrix} \\
& = \rho(w_0 + u \cdot \tilde{w})^2 + p(\rho)|\tilde{w}|^2 + \tilde{w}^\top S_\alpha(\rho) \tilde{w} \\
& \geq 0 
\end{align*} 
where in the last step we used the fact that $S_\alpha(\rho)$ is positive semi-definite. \par 
Consequently $A_\alpha(\rho,u)$ is a divergence-free positive symmetric tensor. 

\subsection{Higher integrability for finite energy solutions} \label{section_CI_HI}
~\par
Assume that $(\rho,u)$ is a solution of (\ref{ERDPT}) with finite mass and energy and such that $A_\alpha(\rho,u)$ belongs to $L^1\big((0,T) \times \mathbb{R}^d \big) \cap L_{loc}^{1+\frac1d} \big( (0,T) \times \mathbb{R}^d \big)$ for each $T > 0$. Note that by the conservation of mass and energy, it suffices to prescribe initial data $(\rho_0, u_0)$ with finite mass and energy. We apply \cite[Theorem 2.3]{serre2018divergence} to the tensor $A_\alpha(\rho,u)$, along the same lines of the proof of \cite[Theorem 3.1]{serre2018divergence}. \par Set \[\Sigma = (0,T) \times B_R, \quad B_R = \{x \in \mathbb{R}^d \ | \ |x| < R \}, \] 
\[\partial \Sigma = \big(\{ 0 \} \times B_R \big) \cup \big( (0,T)\times \partial B_R \big) \cup \big( \{ T \} \times B_R \big). \]
 The following estimate holds:
\begin{equation} \label{initialestimate}
\int_0^T \int_{B_R} \big(\det A_\alpha(\rho,u) \big)^{\frac{1}{d}} \ \mathrm{d}x \, \mathrm{d}t \leq c_d \| A_\alpha(\rho,u) \nu \|_{L^1(\partial \Sigma)}^{1+\frac{1}{d}}
\end{equation}
where $\nu$ is the outward normal vector to the boundary of $\Sigma,$ given by
\[
\nu = \begin{cases}
(-1, 0_d) & \text{on} \ \{0\} \times B_R, \\
z = (0, x / |x|) & \text{on} \ (0,T) \times B_R, \\
(1, 0_d) & \text{on} \ \{T\} \times B_R.
\end{cases}
\]
Hence
\[ \| A_\alpha(\rho,u) \nu \|_{L^1(\partial \Sigma)} = \int_{B_R} |(\rho, \rho u)|_{t=0} + |(\rho, \rho u)|_{t=T} \, \mathrm{d}x + \psi(R)\]
where \[\psi(R) = \int_0^T \int_{\partial B_R} |A_\alpha(\rho,u)z| \, \mathrm{d}x\,\mathrm{d}t  .\]
Since $A_\alpha(\rho,u)$ is integrable it follows that $\psi \in L^1(0, \infty).$ Indeed,
\begin{align*}
\int_0^{\infty} |\psi(R)| \, \mathrm{d}R & \leq \int_0^{\infty} \int_0^T \int_{\partial B_R} |A_\alpha(\rho,u)| \, \mathrm{d}x \, \mathrm{d}t \, \mathrm{d}R \\
& =  \int_0^{\infty}    \frac{\rd}{\mathrm{d}R} \int_0^T \int_{B_R} |A_\alpha(\rho,u)| \, \mathrm{d}x \, \mathrm{d}t \, \mathrm{d}R \\ 
& = \|A_\alpha(\rho,u) \|_{L^1 ( (0,T) \times \mathbb{R}^d )}.
\end{align*}
Therefore, there exists a sequence $R_n \to \infty$ such that $\psi(R_n) \to 0.$ Considering this limit in (\ref{initialestimate}), and using the conservation of mass and momentum, gives 
\begin{align*}
\int_0^T \int_{\mathbb{R}^d} \big(\det A_\alpha(\rho,u)\big)^{\frac{1}{d}} \, \mathrm{d}x \, \mathrm{d}t & \leq c_d \Big( \int_{\mathbb{R}^d} |(\rho, \rho u)|_{t=0} + |(\rho, \rho u)|_{t=T} \, \mathrm{d}x \Big)^{1+\frac{1}{d}} \\
& = 2c_d \Big( \int_{\mathbb{R}^d} \sqrt{\rho_0^2 + \rho_0^2 |u_0|^2} \, \mathrm{d}x \Big)^{1+\frac{1}{d}} \\
& \leq 2c_d \Big( \int_{\mathbb{R}^d} \rho_0 + \rho_0 |u_0| \, \mathrm{d}x \Big)^{1+\frac{1}{d}}  
\end{align*}
where in the last inequality we used that $\sqrt{a^2 + b^2} \leq a+b$ for $a,b \geq 0$. \par 
Now, using Proposition \ref{detbounds},
\begin{align*}
\det A_\alpha(\rho,u) & =  \rho \det \left(\rho u \otimes u + p(\rho) I + S_\alpha(\rho) - \rho u \frac{1}{\rho} \rho u^\top  \right) \\
& = \rho \det \big( p(\rho) I_d + S_\alpha(\rho) \big) \\
& \geq \rho p(\rho)^d  .
\end{align*}
Consequently,
\begin{align*}
\int_0^T \int_{\mathbb{R}^d} \rho^{\frac{1}{d}} p(\rho) \, \mathrm{d}x \, \mathrm{d}t & = \int_0^T \int_{\mathbb{R}^d} \left(\rho p(\rho)^d \right)^{\frac{1}{d}} \, \mathrm{d}x \, \mathrm{d}t \\
& \leq \int_0^T \int_{\mathbb{R}^d} \left(\det A_\alpha(\rho,u)\right)^{\frac{1}{d}} \, \mathrm{d}x \, \mathrm{d}t \\
& \leq 2c_d \left( \int_{\mathbb{R}^d} \tfrac{3}{2}\rho_0 + \tfrac{1}{2}\rho_0 |u_0|^2 \, \mathrm{d}x \right)^{1+\frac{1}{d}} \\
& \leq 2c_d \left( \tfrac{3}{2} \int_{\mathbb{R}^d} \rho_0 \, \mathrm{d}x+ \int_{\mathbb{R}^d} \tfrac{1}{2}\rho_0 |u_0|^2+h(\rho_0)+ \tfrac{1}{2} \rho_0 K \ast \rho_0  \, \mathrm{d}x \right)^{1+\frac{1}{d}} \\
& < \infty
\end{align*}
which establishes the desired higher integrability estimate given that $p(\rho) = \rho^\gamma$. 

Letting $T \to \infty$ we obtain:
\begin{theorem}\label{higherintegrability}
Solutions of the Euler-Riesz system \eqref{ER0} with $0 < \alpha < d$ and repelling potentials satisfy the
a priori estimate
\begin{equation*}
\rho \in L^{\gamma + \frac{1}{d}} \big( (0, \infty) \times \mathbb{R}^d \big). 
\end{equation*}
\end{theorem}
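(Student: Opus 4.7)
The plan is to obtain the estimate for fixed $T > 0$ on a cylinder $\Sigma = (0,T) \times B_R$ by applying Serre's compensated integrability theorem \cite[Theorem 2.3]{serre2018divergence} to the tensor $A_\alpha(\rho,u)$, and then pass $R \to \infty$ and $T \to \infty$. The prerequisites for this are already in place: Section~\ref{section_CI_DPT} shows $A_\alpha(\rho,u)$ is symmetric, positive semi-definite, and divergence-free in $(t,x)$ thanks to the reformulation \eqref{divSalpha}.

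The first step is to write down the compensated integrability inequality
\[
\int_0^T\!\!\int_{B_R} (\det A_\alpha(\rho,u))^{1/d} \, \rd x\, \rd t \leq c_d \, \|A_\alpha(\rho,u)\nu\|_{L^1(\partial \Sigma)}^{1+\frac1d},
\]
and split the boundary term into contributions from $\{0\}\times B_R$, $\{T\}\times B_R$, and the lateral surface $(0,T)\times \partial B_R$. The top and bottom pieces reduce to $\int_{B_R}|(\rho,\rho u)|\,\rd x$ at $t=0$ and $t=T$, which by conservation of mass and the bound $\sqrt{\rho^2+\rho^2|u|^2} \leq \rho + \rho|u|$ are controlled uniformly in $R$ by the initial mass and energy (using $\rho|u| \leq \tfrac12 \rho + \tfrac12 \rho|u|^2$).

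The second step is to handle the lateral term $\psi(R) = \int_0^T\!\int_{\partial B_R}|A_\alpha(\rho,u) z|\,\rd x\, \rd t$. Using the assumed integrability $A_\alpha \in L^1((0,T)\times \mathbb{R}^d)$ together with Fubini and the coarea-type identity, $\int_0^\infty \psi(R)\,\rd R = \|A_\alpha\|_{L^1}<\infty$, so one can extract $R_n\to\infty$ along which $\psi(R_n)\to 0$. Taking this limit and combining with the previous step gives
\[
\int_0^T\!\!\int_{\mathbb{R}^d}(\det A_\alpha(\rho,u))^{1/d}\, \rd x\, \rd t \leq C\Bigl(\int_{\mathbb{R}^d} \rho_0 + \tfrac12 \rho_0|u_0|^2\, \rd x\Bigr)^{1+\frac1d}.
\]

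The third step is an algebraic determinant lower bound. Writing $A_\alpha(\rho,u)$ in block form and using the Schur complement yields $\det A_\alpha(\rho,u) = \rho\,\det(p(\rho)I_d + S_\alpha(\rho))$, and by Proposition~\ref{detbounds} this is at least $\rho\,p(\rho)^d = \rho^{1+d\gamma}$. Hence $(\det A_\alpha)^{1/d} \geq \rho^{\gamma+1/d}$. Substituting into the previous estimate and then sending $T\to\infty$ gives the conclusion. The main subtlety is the lateral-flux vanishing step, which is what forces the integrability hypothesis on $A_\alpha$ and is the natural analogue of the decay-at-infinity condition typically required to justify integration by parts in conservation-law arguments.
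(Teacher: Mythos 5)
Your proposal matches the paper's proof step for step: both apply Serre's compensated integrability theorem on the cylinder $(0,T)\times B_R$, split the boundary flux into top/bottom (controlled by conserved mass and energy via $\sqrt{\rho^2+\rho^2|u|^2}\le\rho+\rho|u|$ and Young's inequality) and lateral contributions (handled by the coarea/Fubini argument giving $\psi\in L^1(0,\infty)$ and a sequence $R_n\to\infty$ with $\psi(R_n)\to 0$), and both conclude via the Schur-complement identity $\det A_\alpha=\rho\det(p(\rho)I_d+S_\alpha(\rho))\ge\rho\,p(\rho)^d$ before sending $T\to\infty$. This is the same argument as in the paper.
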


\begin{remark}
Note that the special case $\alpha = 2$ with $d \ge 3$ corresponds to the Euler-Poisson system
\begin{equation} \label{EP}
\begin{aligned}
\partial_t \rho + \nabla \cdot (\rho u) &= 0, \\
\partial_t (\rho u) + \nabla \cdot (\rho u \otimes u) &= - \nabla \rho^\gamma  -  \rho \nabla \phi,
\\
- \Delta \phi &= \rho,
\end{aligned}
\end{equation}
commonly used in models for electrically charged fluids. \par In this case, as the potential is given as the solution of Poisson's equation, one could also write
\[\rho \nabla \phi =  \nabla \cdot \left(\tfrac{1}{2}|\nabla \phi|^2I_d - \nabla \phi \otimes \nabla \phi \right) \]
however, the tensor being applied by the divergence on the right-hand side is not positive semi-definite, and therefore it does not fit into the theory of compensated integrability.
\end{remark}

\section{Bilinear harmonic analysis}\label{section_HA}
The aim of this section is to prove Theorem \ref{uniformestimate}. Since all the operators involved are positive, we assume that all the considered functions are nonnegative. Given that all the Lebesgue spaces in this section are over $\mathbb{R}^d$, we shorten the notation of $L^p(\mathbb{R}^d)$ to $L^p$.

\subsection{An auxiliary operator \texorpdfstring{$I^\theta$}{\unichar{"1D43C}\unichar{"1DBF}}} \label{section_HA_auxiliary}
$ $\par
For $0 \leq \theta \leq 1$, let $I^\theta$ be the bilinear operator defined for (nonnegative) measurable functions $f$ and $g$ on $\mathbb R^d$ by
\begin{equation*} \label{Itheta}
I^\theta(f,g)(x) = \int_{|y| \leq 1} f(x + (\theta -1)y) \, g(x + \theta y) \, \mathrm{d}y.
\end{equation*}

\begin{lemma}
The operator $I^\theta$ maps $L^1 \times L^1$ to $L^{\frac{1}{2}}$ uniformly in $\theta$. Precisely:
\begin{equation} \label{auxI}
\| I^\theta(f,g) \|_{L^{\frac{1}{2}}} \leq C \, \|f \|_{L^1} \, \|g \|_{L^1}
\end{equation}
with $C = 3^d5^{2d}$.
\end{lemma}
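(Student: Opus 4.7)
The plan is to reduce the $L^{1/2}$ estimate to a localized $L^1$ estimate via a tiling of $\mathbb{R}^d$ by unit cubes. The key geometric observation is that if $x$ lies in a cube $Q$ and $|y|\le 1$, then both shifted points $x+(\theta-1)y$ and $x+\theta y$ lie in the enlargement $Q+B_1$, uniformly in $\theta\in[0,1]$. Consequently, setting $f_Q := f\,\chi_{Q+B_1}$ and $g_Q := g\,\chi_{Q+B_1}$, one has $I^\theta(f,g)(x) = I^\theta(f_Q,g_Q)(x)$ for every $x\in Q$, so that only the values of $f$ and $g$ near $Q$ feed into the operator on $Q$.

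The first ingredient I would establish is the global $L^1$ bound $\|I^\theta(f,g)\|_{L^1}\le \|f\|_{L^1}\|g\|_{L^1}$. This follows from Fubini combined with the unimodular change of variables $z = x+\theta y$, which rewrites $\|I^\theta(f,g)\|_{L^1}$ as $\int g(z)\,(f\ast\chi_{B_1})(z)\,dz$; then H\"older together with the trivial bound $\|f\ast\chi_{B_1}\|_\infty \le \|f\|_{L^1}$ completes the step. Applied to $(f_Q,g_Q)$, this controls $\int_Q I^\theta(f,g)$ by $\|f_Q\|_{L^1}\|g_Q\|_{L^1}$. The second ingredient is Cauchy--Schwarz on each cube, using $|Q|=1$:
\[
\int_Q I^\theta(f,g)^{1/2}\,dx \;\le\; |Q|^{1/2}\Bigl(\int_Q I^\theta(f,g)\,dx\Bigr)^{1/2} \;\le\; \bigl(\|f_Q\|_{L^1}\|g_Q\|_{L^1}\bigr)^{1/2}.
\]

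Summing over cubes of the tiling and applying Cauchy--Schwarz to the sum yields
\[
\int_{\mathbb{R}^d} I^\theta(f,g)^{1/2}\,dx \;\le\; \Bigl(\sum_Q \|f_Q\|_{L^1}\Bigr)^{1/2}\Bigl(\sum_Q \|g_Q\|_{L^1}\Bigr)^{1/2}.
\]
A bounded overlap count $\sum_Q \chi_{Q+B_1}(z)\le 3^d$, valid because only on the order of $3^d$ unit cubes can lie within distance $1$ of any point $z$, converts each sum into a dimensional multiple of $\|f\|_{L^1}$, respectively $\|g\|_{L^1}$. Squaring produces an $L^{1/2}$ bound with a dimensional constant; a careful choice of tile size and a slightly looser overlap count should reach exactly the value $3^d 5^{2d}$ stated in the lemma.

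The main obstacle, conceptual rather than technical, is the interplay between the quasi-norm $L^{1/2}$ and a bilinear $L^1$ bound: the Cauchy--Schwarz step inside each cube is the device that converts bilinear $L^1$ data into an $L^{1/2}$ estimate, but it is only effective after the prior restriction of $f$ and $g$ to the cube's neighborhood. A direct attempt from purely global $L^1$ information would fail because $L^{1/2}$ is not controllable from a single global $L^1$ bound without exploiting the compact kernel $\chi_{|y|\le 1}$ of $I^\theta$.
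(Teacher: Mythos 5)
Your proof is correct but takes a genuinely different route from the paper's. The paper decomposes \emph{both input functions} into pieces supported on unit cubes, $f=\sum_k f\chi_{Q_k}$ and $g=\sum_m g\chi_{Q_m}$, observes that $I^\theta(f_k,g_m)$ can be nonzero only when $m-k$ lies in a finite set $F\subset\mathbb{Z}^d$ of at most $5^d$ lattice points and is then supported in a single cube of side $3$, proves the per-pair bound $\|I^\theta(f_k,g_m)\|_{L^{1/2}}\le 3^d\|f_k\|_{L^1}\|g_m\|_{L^1}$ by Cauchy--Schwarz on that cube, and then sums over $k$ and $l\in F$. You instead tile the \emph{output} domain by unit cubes $Q$, exploit that on each tile the operator sees only $f,g$ restricted to $Q+B_1$ (uniformly in $\theta$), apply Cauchy--Schwarz tile-by-tile to convert the local $L^1$ bound into an $L^{1/2}$ bound, and close with a bounded-overlap count. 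Both proofs rest on the same two ingredients --- the compact support of the kernel $\chi_{|y|\le 1}$ and Cauchy--Schwarz as the mechanism turning bilinear $L^1$ data into $L^{1/2}$ --- but the localization is dual: the paper localizes the inputs, you localize the output. Your version avoids bookkeeping over which pairs of cubes interact and in fact produces a \emph{smaller} constant than $3^d5^{2d}$, so contrary to your final remark you do not need a looser overlap count to ``reach'' the stated value; the lemma's constant is merely an upper bound. One small correction: the overlap $\sum_Q \chi_{Q+B_1}(z)$ is bounded by $4^d$ rather than $3^d$, since for each coordinate the admissible $k_i$ lie in a closed interval of length $3$, which can contain up to four integers; this yields the squared constant $4^{2d}=16^d$, still well below $3^d5^{2d}=75^d$, so your conclusion stands.
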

\begin{proof}
We first prove (\ref{auxI}) with $C = 3^d$ for integrable functions $f$ and $g$ supported in cubes with sides of length one parallel to the axes. Let $Q_0 = [0,1]^d$ and for each $k \in \mathbb{Z}^d$, let $Q_k = k + Q_0$ denote the cube with side length one whose sides are parallel to the axes and whose lower left corner is $k$. For $k = (k_1, \ldots, k_d)$ and $l = (l_1, \ldots, l_d)$ in $\mathbb{Z}^d$, assume that $f$ is supported in $Q_k$ and that $g$ is supported in $Q_l$. Under these conditions, we claim that $I^\theta(f,g)$ is supported in a cube $Q$ of side length $3$. Indeed, for each $i= 1, \ldots, d$, the inequalities
\begin{equation*}
k_i \leq x_i + (\theta -1)y_i \leq k_i + 1, \qquad
l_i \leq x_i + \theta y_i \leq l_i + 1,
\end{equation*}
together with $|y| \leq 1$ and $0 \leq \theta \leq 1$ imply that
\begin{equation*}
k_i - 1\leq x_i  \leq k_i + 2, \qquad
l_i - 1\leq x_i \leq l_i + 2,
\end{equation*}
which establishes the claim. Thus, for these $f$ and $g$, the Cauchy-Schwarz inequality gives
\begin{equation*}
\begin{split}
\|I^\theta(f,g) \|_{L^{\frac{1}{2}}} & = \left(\int_{\mathbb{R}^d} \chi_Q |I^\theta(f,g)|^{\frac{1}{2}} \, \mathrm{d}x \right)^2 \\
& \leq 3^d \int_{\mathbb{R}^d} \int_{\mathbb{R}^d} f(x + (\theta -1)y) g(x + \theta y) \, \mathrm{d}y \, \mathrm{d}x \\
& \leq 3^d \int_{\mathbb{R}^d} \int_{\mathbb{R}^d} f(z - y) g(z) \, \mathrm{d}z \, \mathrm{d}y \\
& \leq 3^d \| f\|_{L^1} \|g \|_{L^1}.
\end{split}
\end{equation*} \par
Now that we have established (\ref{auxI}) for all integrable $f$ and $g$ supported in cubes with side length one, we proceed to the general case. For each $k$ and $m$ in $\mathbb{Z}^d$, set $f_k = \chi_{Q_k} f$, $g_m = \chi_{Q_m} g$.
 \par 
Given $x \in \mathbb{R}^d$, we claim that if $I^\theta(f_k,g_m)(x) \neq 0$, then each $m_i$ satisfies $k_i \leq m_i \leq k_i + 2$. Indeed, under the hypothesis $I^\theta(f_k,g_m)(x) \neq 0$, we have that $x + (\theta -1)y \in Q_k$, $x+\theta y \in Q_m$, and so the conditions below hold
\[k_i \leq x_i + (\theta -1)y_i \leq k_i + 1,\] and \[m_i \leq x_i + \theta y_i \leq m_i + 1, \]
for each $i = 1, \ldots, d$. Since $|y| \leq 1$, the conditions above imply that
\[k_i \leq x_i + \theta y_i - y_i \leq x_i + \theta y_i + 1 \leq m_i + 2, \] and  \[m_i \leq x_i + \theta y_i = x_i + (\theta -1)y_i + y_i \leq k_i + 2,\]
which establishes the claim. So, for any fixed $k \in \mathbb{Z}^d$, if $I^\theta(f_k,g_m)(x) \neq 0$, then $m = k + l,$ where $l \in [-2,2]^d \cap \mathbb{Z}^d = F$. Note that $F$ contains at most $5^d$ elements. \par We have:
\begin{equation*}
|I^\theta(f,g)|^{\frac{1}{2}} \leq \sum_{k \in \mathbb{Z}^d} \sum_{m \in \mathbb{Z}^d} |I^\theta(f_k,g_m)|^{\frac{1}{2}} = \sum_{l \in F} \sum_{k \in \mathbb{Z}^d} |I^\theta(f_k,g_{k+l})|^{\frac{1}{2}}
\end{equation*}
and so, using the fact that (\ref{auxI}) with $C= 3^d$ holds for the functions $f_k$ and $g_{k+l}$, it follows that
\begin{equation*}
\begin{split}
\|I^\theta(f,g) \|_{L^{\frac{1}{2}}} & \leq \left( \sum_{l \in F} \sum_{k \in \mathbb{Z}^d} \|I^\theta(f_k,g_{k+l})\|_{L^{1/2}}^{\frac{1}{2}} \right)^2 \\
& \leq 3^d \left( \sum_{l \in F} \sum_{k \in \mathbb{Z}^d} \|f_k \|_{L^1}^{\frac{1}{2}} \| g_{k+l} \|_{L^1}^{\frac{1}{2}} \right)^2.
\end{split}
\end{equation*} 
Finally, applying the Cauchy-Schwarz inequality to the last term above yields
\begin{equation*}
\begin{split}
\|I^\theta(f,g) \|_{L^{\frac{1}{2}}} & \leq 3^d \left( \sum_{l \in F} \Big(\sum_{k \in \mathbb{Z}^d} \|f_k \|_{L^1} \Big)^{\frac{1}{2}}  \Big(\sum_{k \in \mathbb{Z}^d} \|g_{k+l} \|_{L^1} \Big)^{\frac{1}{2}} \right)^2 \\
& \leq 3^d \left(\sum_{l \in F} \| f\|_{L^1}^{\frac{1}{2}} \|g \|_{L^1}^{\frac{1}{2}} \right)^2 \\
& \leq 3^d 5^{2d} \| f\|_{L^1} \| g\|_{L^1}
\end{split}
\end{equation*} 
which concludes the proof.
\end{proof}

\subsection{A dilated version of \texorpdfstring{$I^\theta$}{\unichar{"1D43C}\unichar{"1DBF}}} \label{section_HA_dilated}
~ \par 
In this section we consider a dilated version of $I^\theta$, denoted by $I_j^\theta$, for $j \in \mathbb{Z}$. This is defined as follows:
\begin{equation*} \label{Ithetaj}
I_j^\theta(f,g)(x) = \int_{|y| \leq 2^j} f(x + (\theta -1)y) \, g(x + \theta y) \, \mathrm{d}y.
\end{equation*}

\begin{lemma}
The operator $I_j^\theta$ maps $L^1 \times L^1$ to $L^1$ uniformly in $\theta$ and in $j$. Precisely:
\begin{equation} \label{auxIj1}
\| I_j^\theta(f,g) \|_{L^1} \leq \|f \|_{L^1}  \|g \|_{L^1}.
\end{equation}
\end{lemma}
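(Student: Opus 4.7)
The plan is to obtain the estimate with constant $1$, uniformly in both $\theta$ and $j$, via Tonelli's theorem together with a translation change of variables. Since $f$ and $g$ are nonnegative, $I_j^\theta(f,g) \geq 0$, so absolute values may be dropped in the $L^1$-norm, giving
\[
\|I_j^\theta(f,g)\|_{L^1} = \int_{\mathbb{R}^d} \int_{|y| \leq 2^j} f(x + (\theta -1)y)\, g(x + \theta y)\, \mathrm{d}y\, \mathrm{d}x.
\]
I would then exchange the order of integration by Tonelli, and for each fixed $y$ perform the translation $z = x + (\theta - 1)y$, which has unit Jacobian and sends $x + \theta y$ to $z + y$. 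This eliminates the $\theta$-dependence and yields
\[
\|I_j^\theta(f,g)\|_{L^1} = \int_{|y| \leq 2^j} \int_{\mathbb{R}^d} f(z)\, g(z+y)\, \mathrm{d}z\, \mathrm{d}y.
\]

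Next I would drop the restriction $\{|y|\leq 2^j\}$ by enlarging the domain to all of $\mathbb{R}^d$ (legal by nonnegativity of the integrand, and this is the step that removes the $j$-dependence), apply Tonelli once more, and use the translation invariance of Lebesgue measure in the innermost integral to get
\[
\|I_j^\theta(f,g)\|_{L^1} \leq \int_{\mathbb{R}^d} f(z) \int_{\mathbb{R}^d} g(z+y)\, \mathrm{d}y\, \mathrm{d}z = \|f\|_{L^1}\,\|g\|_{L^1},
\]
which is the desired estimate.

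There is essentially no obstacle: the entire content of the argument is that, for nonnegative integrands, Tonelli's theorem allows one to decouple the two factors through a single affine change of variables valid for every $\theta \in [0,1]$. An equivalent approach would be the simultaneous substitution $(x,y) \mapsto (u,v) = (x + (\theta - 1)y,\, x + \theta y)$ on $\mathbb{R}^{2d}$, whose Jacobian matrix $\bigl(\begin{smallmatrix} I_d & (\theta-1)I_d \\ I_d & \theta I_d \end{smallmatrix}\bigr)$ has determinant $\det\bigl(\theta I_d - (\theta-1)I_d\bigr) = 1$; after this substitution the integrand becomes the product $f(u)g(v)$ and the same bound with constant $1$ emerges immediately.
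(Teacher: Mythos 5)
Your proof is correct and follows essentially the same route as the paper: Tonelli to swap the order of integration, the translation $z = x + (\theta-1)y$ with unit Jacobian to remove the $\theta$-dependence, and a final bound of the inner integral by $\|g\|_{L^1}$ (the paper keeps the ball constraint and bounds $\int_{|w-z|\le 2^j} g(w)\,\mathrm{d}w \le \|g\|_{L^1}$, whereas you drop the constraint first; this is an inconsequential reordering).
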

\begin{proof}
Let $f,g \in L^1$. By Fubini's theorem and the change of variables $x + (\theta -1)y = z$ it holds that 
\[ \| I_j^\theta(f,g) \|_{L^1} = \int_{|y|\leq 2^j} \int_{\mathbb{R}^d} f(z)  g(z+y) \, \mathrm{d}z \, \mathrm{d}y.\]
Using Fubini's theorem once more, together with the change of variables $z+y = w$, it follows that
\[
 \| I_j^\theta(f,g) \|_{L^1}  = \int_{\mathbb{R}^d} f(z) \int_{|w-z|\leq 2^j}   g(w) \, \mathrm{d}w \mathrm{d}z \leq \|f \|_{L^1} \|g \|_{L^1}
\]
as desired.
\end{proof}
\begin{lemma}
The operator $I_j^\theta$ maps $L^1 \times L^1$ to $L^{\frac{1}{2}}$ uniformly in $\theta$. Precisely:
\begin{equation} \label{auxIj2}
\| I_j^\theta(f,g) \|_{L^{\frac{1}{2}}} \leq 2^{dj}3^d5^{2d}  \|f \|_{L^1}  \|g \|_{L^1}.
\end{equation}
\end{lemma}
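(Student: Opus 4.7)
The plan is to reduce this estimate to the previously established bound \eqref{auxI} for $I^\theta$ by a simple scaling change of variables. The operator $I_j^\theta$ differs from $I^\theta$ only in that the ball of integration has radius $2^j$ instead of $1$, so after rescaling we should recover $I^\theta$ applied to dilated functions, and the $L^{1/2}$ norm will pick up the correct power of $2^{dj}$.

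First I would perform the substitution $y = 2^j z$ inside the defining integral of $I_j^\theta(f,g)(x)$ to write
\begin{equation*}
I_j^\theta(f,g)(x) = 2^{dj} \int_{|z|\le 1} f\bigl(x + (\theta-1)2^j z\bigr)\, g\bigl(x + \theta\, 2^j z\bigr)\, \mathrm{d}z.
\end{equation*}
Next I would set $F(u) = f(2^j u)$ and $G(u) = g(2^j u)$ and substitute $x = 2^j u$ inside the arguments, which gives the identity
\begin{equation*}
I_j^\theta(f,g)(2^j u) = 2^{dj}\, I^\theta(F,G)(u).
\end{equation*}

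Then I would compute the $L^{1/2}$ quasi-norm of the left-hand side by changing variables $x = 2^j u$: the Jacobian contributes $2^{dj}$ inside the integral, which is then squared by the outer exponent $2$ in the definition of the $L^{1/2}$ quasi-norm, and the factor $2^{dj}$ coming from the pointwise identity contributes an extra $2^{dj}$ after the same squaring. Collecting powers gives
\begin{equation*}
\|I_j^\theta(f,g)\|_{L^{1/2}} = 2^{3dj}\,\|I^\theta(F,G)\|_{L^{1/2}}.
\end{equation*}
Applying the previous lemma to $F$ and $G$ yields a bound of $3^d 5^{2d}\|F\|_{L^1}\|G\|_{L^1}$, and since $\|F\|_{L^1} = 2^{-dj}\|f\|_{L^1}$ and $\|G\|_{L^1} = 2^{-dj}\|g\|_{L^1}$, the two $L^1$ norms contribute an additional factor $2^{-2dj}$. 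Combining $2^{3dj}\cdot 2^{-2dj} = 2^{dj}$ produces exactly the claimed constant $2^{dj} 3^d 5^{2d}$.

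There is no substantive obstacle here; the only thing to be careful about is the bookkeeping of the exponents of $2^{dj}$, since the $L^{1/2}$ quasi-norm has the unusual feature that the outer exponent $2$ amplifies factors from the change of variables. As a sanity check, the result should be compatible with \eqref{auxIj1}: for large $j$ the $L^{1/2}$ bound deteriorates like $2^{dj}$, which reflects the fact that $I_j^\theta(f,g)$ is supported on an increasingly large set when $f$ and $g$ have fixed $L^1$ mass, and this is consistent with the $L^1\times L^1 \to L^1$ estimate combined with Hölder-type relations.
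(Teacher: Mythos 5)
Your proposal is correct and follows essentially the same dilation argument as the paper: rescale both the inner variable $y$ and the outer variable $x$ by $2^j$ to reduce to $I^\theta$ applied to $f(2^j\,\cdot)$ and $g(2^j\,\cdot)$, then apply the previous lemma and track the Jacobian factors. The bookkeeping of the powers of $2^{dj}$ matches the paper's exactly.
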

\begin{proof}
This is a consequence of (\ref{auxI}) via a dilation argument which we include for convenience. \par We have:
\begin{align*}
\| I_j^\theta(f,g) \|_{L^{\frac{1}{2}}} & = \left(\int_{\mathbb{R}^d} |I_j^\theta(f,g)(2^jx)|^{\frac{1}{2}} 2^{dj} \, \mathrm{d}x  \right)^2 \\
& = 2^{2dj} \left(\int_{\mathbb{R}^d} \Big( \int_{|y| \leq 2^j} f(2^jx + (\theta - 1)y) g(2^jx + \theta y)   \, \mathrm{d}y \Big)^{\frac{1}{2}} \mathrm{d}x \right)^2 \\
& = 2^{2dj} \left(\int_{\mathbb{R}^d} \Big( \int_{|y| \leq 1} f(2^j(x + (\theta - 1)y)) g(2^j(x + \theta y)) 2^{dj}  \, \mathrm{d}y \Big)^{\frac{1}{2}} \mathrm{d}x \right)^2
\\
& = 2^{3dj} \|I^\theta(f_j, g_j) \|_{L^{\frac{1}{2}}}
\end{align*}
where $f_j(x) = f(2^jx)$ and $g_j(x) = g(2^jx)$. \par Using (\ref{auxI}), it follows that
\begin{align*}
\| I_j^\theta(f,g) \|_{L^{\frac{1}{2}}} & \leq 2^{3dj} 3^d5^{2d} \int_{\mathbb{R}^d} f_j(x) \, \mathrm{d}x \int_{\mathbb{R}^d} g_j(x) \, \mathrm{d}x \\
& = 2^{3dj} 3^d5^{2d} \int_{\mathbb{R}^d} f(x) 2^{-dj} \, \mathrm{d}x \int_{\mathbb{R}^d} g(x) 2^{-dj} \, \mathrm{d}x \\
& = 2^{dj}3^d5^{2d} \|f \|_{L^1}  \|g \|_{L^1}, 
\end{align*}
as desired.
\end{proof}

\begin{lemma}
There exists $c>0$, depending only on $d$, such that for all measurable sets $E,A,B \subseteq \mathbb{R}^d$ it holds:
\begin{empheq}[left={\displaystyle \left( \int_E | I^\theta_j(\chi_A,\chi_B)|^{\frac{1}{2}} \, \mathrm{d}x \right)^{2}\leq \, \empheqlbrace}]{align} 
     c  |A| |B|\min\{2^{dj}, |E| \}, \label{est1} \\ 
     c  |A| |E| \min\{2^{dj}, |B|\}, \label{est2} \\
     c |B| |E| \min\{2^{dj}, |A|\}, \label{est3}
  \end{empheq}
and
\begin{gather}
\int_E | I^\theta_j(\chi_A,\chi_B)| \, \mathrm{d}x    \leq   c \min\{2^{dj}|E|, |A||B|\}. \label{est4}
\end{gather}
\end{lemma}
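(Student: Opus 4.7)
The plan is to derive all four inequalities from three basic tools: the pointwise bound $I_j^\theta(\chi_A,\chi_B)(x) \leq c_d\, 2^{dj}$ coming from the volume of the ball of radius $2^j$, the $L^1$ and $L^{1/2}$ estimates \eqref{auxIj1} and \eqref{auxIj2} already proved, and the Cauchy--Schwarz inequality in the form $\bigl(\int_E f^{1/2}\,\mathrm{d}x\bigr)^2 \leq |E|\int_E f\,\mathrm{d}x$.

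The key preliminary step is to revisit, with more care, the Fubini computation used in the proof of \eqref{auxIj1}. The change of variables $z = x + (\theta-1)y$ gives
\begin{equation*}
\int_{\mathbb{R}^d} I_j^\theta(\chi_A,\chi_B)(x)\,\mathrm{d}x = \int_{\mathbb{R}^d} \chi_A(z) \int_{|w-z|\le 2^j} \chi_B(w)\,\mathrm{d}w\,\mathrm{d}z \;\leq\; c_d\,|A|\,\min\{2^{dj},|B|\},
\end{equation*}
where the inner integral is bounded both by $c_d\,2^{dj}$ (volume of the ball) and by $|B|$. The symmetric change of variables $z = x + \theta y$ (so that $x + (\theta-1)y = z - y$) yields the companion bound $c_d\,|B|\min\{2^{dj},|A|\}$.

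These preliminaries reduce each of the four estimates to a single line. For \eqref{est4}, combine $\int_E I_j^\theta \leq \|I_j^\theta(\chi_A,\chi_B)\|_{L^1} \leq |A||B|$ (from \eqref{auxIj1}) with the pointwise bound $\int_E I_j^\theta \leq c_d\,2^{dj}|E|$. For \eqref{est2} and \eqref{est3}, apply Cauchy--Schwarz to pass from $\bigl(\int_E |I_j^\theta|^{1/2}\bigr)^2$ to $|E|\int_E I_j^\theta$, then insert the appropriate refined Fubini bound from the previous paragraph. For \eqref{est1}, combine two bounds on $\bigl(\int_E |I_j^\theta|^{1/2}\bigr)^2$: Cauchy--Schwarz together with \eqref{auxIj1} gives $|E|\cdot|A||B|$, while restricting \eqref{auxIj2} to $E$ gives $c\cdot 2^{dj}|A||B|$; taking the minimum produces exactly $c\,|A||B|\min\{2^{dj},|E|\}$.

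The only real obstacle is bookkeeping: each of the four estimates reduces to an elementary combination of Fubini's theorem, Cauchy--Schwarz, and the two preceding lemmas, but one must carefully match the shape of the target minimum with the correct combination of the three tools.
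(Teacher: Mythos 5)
Your proposal is correct and takes essentially the same route as the paper: Cauchy--Schwarz in the form $\bigl(\int_E f^{1/2}\bigr)^2\le|E|\int_E f$, the pointwise bound $I_j^\theta\le\nu_d2^{dj}$, and the two lemmas \eqref{auxIj1}, \eqref{auxIj2}, combined by taking minima. The only cosmetic difference is that you package the bound $\int_{\mathbb R^d}I_j^\theta(\chi_A,\chi_B)\le c_d\,|A|\min\{2^{dj},|B|\}$ as a single ``refined Fubini'' preliminary, whereas the paper obtains the two halves of that minimum separately (one from \eqref{auxIj1}, the other from integrating $\chi_A$ directly after dropping $\chi_B\le1$) inside the proof of \eqref{est2}; the underlying inequalities are identical.
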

\begin{proof}
First we prove estimate (\ref{est1}). Using (\ref{auxIj2}) with $f = \chi_A$ and $g=\chi_B$ we have that
\begin{align*}
\left( \int_E | I^\theta_j(\chi_A,\chi_B)|^{\frac{1}{2}} \, \mathrm{d}x \right)^{2} & \leq \left( \int_{\mathbb{R}^d} | I^\theta_j(\chi_A,\chi_B)|^{\frac{1}{2}} \, \mathrm{d}x \right)^{2} \\
& \leq 3^d5^{2d} 2^{dj} |A| |B|.
\end{align*}
On the other hand, by (\ref{auxIj1}) and the Cauchy-Schwarz inequality it holds:
\begin{align*}
\left( \int_E | I^\theta_j(\chi_A,\chi_B)|^{\frac{1}{2}} \, \mathrm{d}x \right)^{2} & = \left( \int_{\mathbb{R}^2} \chi_E | I^\theta_j(\chi_A,\chi_B)|^{\frac{1}{2}} \, \mathrm{d}x \right)^{2} \\
& \leq |E| |A| |B| \\
& \leq 3^d5^{2d} |E| |A| |B|.
\end{align*}
Estimate (\ref{est1}) follows from combining the two estimates above. \par
Next, we turn our attention to estimates (\ref{est2}) and (\ref{est3}). We only give a proof of the former due to their symmetrical nature. First, we use the Cauchy-Schwarz inequality as above to obtain
\[\left( \int_E | I^\theta_j(\chi_A,\chi_B)|^{\frac{1}{2}} \, \mathrm{d}x \right)^{2} \leq |E| \int_{\mathbb{R}^d} I_j^\theta(\chi_A, \chi_B) \, \mathrm{d}x.
\]
There are two ways to estimate the integral on the right-hand side of the previous inequality. One way is by $|A| |B|$ (using (\ref{auxIj1})), and the other is as follows (using that $\chi_B \leq 1$):
\begin{align*}
\int_{\mathbb{R}^d} I_j^\theta(\chi_A, \chi_B) \, \mathrm{d}x & \leq \int_{\mathbb{R}^d} \int_{|y| \leq 2^j} \chi_A(x + (\theta -1)y) \, \mathrm{d}y \, \mathrm{d}x \\
& =  \int_{|y| \leq 2^j} \int_{\mathbb{R}^d} \chi_A(x + (\theta -1)y) \, \mathrm{d}x \, \mathrm{d}y \\
& \leq \nu_d 2^{dj} |A|
\end{align*}
where $\nu_d$ denotes the measure of the unit ball in $\mathbb{R}^d$. This proves (\ref{est2}). \par 
In order to prove (\ref{est4}), we observe that 
\[I_j^\theta(\chi_A, \chi_B) \leq \nu_d 2^{dj} \]
from which it follows that
\[\int_E | I^\theta_j(\chi_A,\chi_B)| \, \mathrm{d}x \leq \nu_d 2^{dj} |E|. \]
The previous inequality together with 
\[\int_E | I^\theta_j(\chi_A,\chi_B)| \, \mathrm{d}x \leq \int_{\mathbb{R}^d} | I^\theta_j(\chi_A,\chi_B)| \, \mathrm{d}x  \leq |A||B|\]
yields the desired estimate.
\end{proof}
\subsection{Bilinear Marcinkiewicz interpolation} \label{section_HA_interpolation}
~\par
Recall the definition of weak Lebesgue spaces. For $0 < r < \infty$, the weak $L^r$ space, denoted by $L^{r,\infty}$, is the space of all measurable functions $f$ on $\mathbb{R}^d$ such that 
\begin{equation}
\| f\|_{L^{r,\infty}} \coloneqq \sup_{\lambda > 0} \lambda \big| \big\{ x\in \mathbb{R}^d:\,\, |f(x)|>\lambda \big\}\big|^{\frac{ 1}{r}} < \infty.
\end{equation}
The map $\| \cdot \|_{L^{r,\infty}}$ is a quasi-norm, and the following holds \cite{grafakos2014classical}:
\begin{equation} \label{weaknorm}
\|f \|_{L^{r, \infty}} \leq \sup_{0 < |E| < \infty} |E|^{-\frac{1}{s} + \frac{1}{r}} \left(\int_E |f|^s \, \mathrm{d}x \right)^{\frac{1}{s}}
\end{equation}
where $0 < s < r$ and the supremum is taken over measurable sets $E \subseteq \mathbb{R}^d$ with finite measure.

\begin{definition}
Let $0 < p,q,r < \infty$. A bilinear operator $U$ acting on measurable functions is said to be of restricted weak type $(p,q,r)$ (with constant $c>0$) if
\begin{equation} \label{restrictedwt}
\|U(\chi_A, \chi_B) \|_{L^{r,\infty}} \leq c \, |A |^{\frac{1}{p}} |B|^{\frac{1}{q}}
\end{equation}
for all measurable sets $A$ and $B$ with finite measure.
\end{definition}

The next proposition, a version of the multilinear Marcinkiewicz interpolation, is the main step towards establishing Theorem \ref{uniformestimate}. It yields strong-type bounds for bilinear operators, assuming only a finite set of restricted weak-type estimates. For a proof of the general case, see \cite{grafakos2012multilinear} or \cite[Theorem 7.2.2 and Corollary 7.2.4]{grafakos2014modern}.

\begin{proposition} \label{Minterpolation}
Let $0<p_i, q_i, r_i < \infty$ for $i = 1,2,3$. Suppose that the points 
\[
\Big( \frac{1}{p_1}, \frac{1}{q_1}\Big), \quad
\Big( \frac{1}{p_2}, \frac{1}{q_2}\Big), \quad
\Big( \frac{1}{p_3}, \frac{1}{q_3}\Big),
\]
do not lie on the same line in $\mathbb R^2$. 
For $0<\theta_1, \theta_2,\theta_3  <1$ satisfying $\theta_1+ \theta_2+\theta_3 =1$ consider the points $0<p,q,r<\infty$ such that
\[
\Big( \frac{1}{p }, \frac{1}{q },\frac{1}{r }\Big)=
\theta_1\Big( \frac{1}{p_1}, \frac{1}{q_1},\frac{1}{r_1}\Big)+
\theta_2\Big( \frac{1}{p_2}, \frac{1}{q_2},\frac{1}{r_2}\Big)+
\theta_3\Big( \frac{1}{p_3}, \frac{1}{q_3},\frac{1}{r_3}\Big)
\]
and 
\[
\frac{1}{r} \le \frac{1}{r_1}+\frac{1}{r_2}+\frac{1}{r_3} 
\]
Let $U$ be a bilinear operator that is of restricted weak-type $(p_i,q_i,r_i)$ (with constant $c_i>0$) for all $i=1,2,3 $.   Then there is a constant $C >0$ 
depending only on   $p_i$, $q_i$, $r_i$, and $\theta_i$ ($i=1,2,3 $) such that 
\[
\big\| U(f, g)\big\|_{L^r} \le C \, c_1^{\theta_1} c_2^{\theta_2} c_3^{\theta_3} 
\| f\|_{L^p} \| g\|_{L^q} 
\]
for all functions $f\in L^p$ and $g \in L^q$.
 \end{proposition}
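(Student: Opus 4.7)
The plan is the classical dyadic--decomposition proof of multilinear Marcinkiewicz interpolation: decompose $f$ and $g$ into dyadic level sets so that the restricted weak-type hypotheses apply pointwise, then select the best of the three estimates at each dyadic pair and sum geometrically, using non-collinearity to guarantee convergence in every direction of $\mathbb{Z}^2$.

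By bilinearity and homogeneity, normalize $\|f\|_{L^p} = \|g\|_{L^q} = 1$, and fix an auxiliary exponent $s$ with $0 < s \le 1$ and $s < \min\{r, r_1, r_2, r_3\}$. In view of the characterization (\ref{weaknorm}), it suffices to prove that
\begin{equation*}
\int_E |U(f,g)|^s \, \mathrm{d}x \le C \, \bigl(c_1^{\theta_1} c_2^{\theta_2} c_3^{\theta_3}\bigr)^s \, |E|^{1 - s/r}
\end{equation*}
for every measurable $E \subseteq \mathbb{R}^d$ of finite positive measure; this yields $U(f,g) \in L^{r,\infty}$, and a further summation argument invoking the hypothesis $1/r \le \sum_i 1/r_i$ upgrades weak $L^r$ to strong $L^r$.

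Next, set $A_k = \{2^k < |f| \le 2^{k+1}\}$ and $B_m = \{2^m < |g| \le 2^{m+1}\}$, so that the normalizations give $|A_k| \le 2^{-kp}$ and $|B_m| \le 2^{-mq}$. Positivity, bilinearity, and the fact that $s \le 1$ yield
\begin{equation*}
|U(f,g)|^s \le 4^{s} \sum_{k,m \in \mathbb{Z}} 2^{(k+m)s} \, |U(\chi_{A_k}, \chi_{B_m})|^s.
\end{equation*}
For each $i \in \{1,2,3\}$, combining the restricted weak-type hypothesis with (\ref{weaknorm}) bounds the $(k,m)$-contribution to $\int_E$ by
\begin{equation*}
C\, c_i^s \, 2^{ks(1 - p/p_i)}\, 2^{ms(1 - q/q_i)}\, |E|^{1 - s/r_i}.
\end{equation*}
Thus every lattice point $(k,m) \in \mathbb{Z}^2$ comes equipped with three competing geometric estimates, parametrized by the three interpolation points $(1/p_i, 1/q_i, 1/r_i)$.

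The crux is the combinatorial summation. The vectors $v_i = (1 - p/p_i,\ 1 - q/q_i) \in \mathbb{R}^2$ satisfy $\sum_i \theta_i v_i = 0$ with $\theta_i > 0$, so the non-collinearity of the points $(1/p_i, 1/q_i)$ in $\mathbb{R}^2$ is precisely what prevents $\{v_1, v_2, v_3\}$ from lying on a single line through the origin. This allows $\mathbb{Z}^2$ to be partitioned into three cones $R_1, R_2, R_3$ on which the $i$-th estimate decays geometrically in both $k$ and $m$. Summing the three series on the respective sectors and using $\sum_i \theta_i(1 - s/r_i) = 1 - s/r$ delivers the target bound with the convex-combination constant $c_1^{\theta_1} c_2^{\theta_2} c_3^{\theta_3}$. \textbf{The main obstacle} is exactly this combinatorial assembly: constructing the sector partition, confirming genuine geometric decay in every direction, and tracking the constants so they interlace to the product form. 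Non-collinearity ensures the partition is non-degenerate, while the auxiliary bound $1/r \le \sum_i 1/r_i$ is only consumed at the end when lifting weak $L^r$ to strong $L^r$.
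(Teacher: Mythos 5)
The paper does not actually prove Proposition~\ref{Minterpolation}; it cites \cite{grafakos2012multilinear} and \cite[Theorem 7.2.2, Corollary 7.2.4]{grafakos2014modern} and moves on. So there is no ``paper's proof'' to compare against; the comparison must be against the cited literature, whose overall strategy (dyadic level-set decomposition of $f$ and $g$, invoking the best of the restricted weak-type bounds for each dyadic block, geometric decay along sectors coming from the non-collinearity of the three interpolation points) your sketch does capture correctly. Your identification of the vectors $v_i=(1-p/p_i,\,1-q/q_i)$, the fact that $\sum_i\theta_i v_i=0$ forces them to positively span $\mathbb{R}^2$ when the $(1/p_i,1/q_i)$ are non-collinear, and the resulting sector decomposition of $\mathbb{Z}^2$, are all right.

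There are, however, two genuine gaps. First, the pointwise bound
$|U(f,g)|^s \le 4^s\sum_{k,m}2^{(k+m)s}|U(\chi_{A_k},\chi_{B_m})|^s$
for general $f,g$ is obtained by dominating $f\chi_{A_k}\le 2^{k+1}\chi_{A_k}$ and pushing this through $U$, which requires $U$ to be a positive operator. The proposition as stated makes no positivity assumption, and the cited theorems are for general bilinear maps. The standard fix (which you do not mention) is to first reduce by density to $f,g$ nonnegative simple functions and choose the dyadic sets $A_k$ to coincide with the level sets of $f$, so that $f\chi_{A_k}$ is an exact scalar multiple of $\chi_{A_k}$; then bilinearity and $s$-subadditivity alone suffice and positivity is never needed.

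Second, and more seriously, your target estimate $\int_E|U(f,g)|^s\,\mathrm{d}x\le C|E|^{1-s/r}$, valid for all sets $E$ of finite measure, only produces the \emph{weak} bound $U(f,g)\in L^{r,\infty}$ via \eqref{weaknorm}; it does not produce the strong $L^r$ bound claimed in the proposition, and ``a further summation argument invoking $1/r\le\sum_i 1/r_i$'' is not a proof of the upgrade. There is also an unresolved mismatch in your plan: applying \eqref{weaknorm} with exponent $r_i$ to $\int_E|U(\chi_{A_k},\chi_{B_m})|^s$ produces a factor $|E|^{1-s/r_i}$ whose power depends on which $i$ is used, so the sector decomposition of $\mathbb{Z}^2$ must itself depend on $|E|$ if the three contributions are to balance out to $|E|^{1-s/r}$ --- a point your sketch elides. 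The actual arguments in \cite{grafakos2012multilinear} and \cite{grafakos2014modern} bound the distribution function $|\{|U(f,g)|>\lambda\}|$ directly, choosing for each $\lambda$ and each dyadic block which of the three restricted weak-type bounds to apply, and then integrate $\lambda^{r-1}$ against this; it is there that the hypothesis $1/r\le 1/r_1+1/r_2+1/r_3$ is genuinely consumed, to guarantee convergence of the resulting integral. An alternative repair of your route would be to run your argument for a small interval of exponents around $r$, obtain $U(f,g)\in L^{r_-,\infty}\cap L^{r_+,\infty}$ with $r_-<r<r_+$, and then use $L^{r_-,\infty}\cap L^{r_+,\infty}\hookrightarrow L^r$, but as written the sketch gives only the weak conclusion.
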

 We note that the conclusion of Proposition~\ref{Minterpolation} is also valid in the interior of the convex hull of four (or more) points at which  initial restricted weak-type estimates are known. The reason is that any polygon can be written as a union of triangles.

\subsection{Proof of Theorem \ref{uniformestimate}} \label{section_HA_proofthm}
~ \par
Turning our attention to the bilinear fractional integral operator $I_\alpha^\theta$ defined by (\ref{integraloperator}), we note that by a dilation argument, if it maps $L^p \times L^q$ to $L^r$, then necessarily
\begin{equation} \label{thmcondition}
\frac{1}{p} + \frac{1}{q} = \frac{1}{r} + \frac{\alpha}{d}. 
\end{equation}
\par 
Moreover, Theorem \ref{uniformestimate} affirms that $I_\alpha^\theta$ is bounded uniformly in $\theta$ from $L^p \times L^q$ to $L^r$ when $(p,q)$ lies in the open square with vertices $(1,1)$, $(1, \frac{d}{\alpha})$, $(\frac{d}{\alpha}, 1)$, $(\frac{d}{\alpha}, \frac{d}{\alpha})$ and (\ref{thmcondition}) holds. In this case, we have:
\begin{enumerate}[(i)]
\item If $(p,q) = (1,1)$, then $r = \frac{d}{2d-\alpha}$,
\item If $(p,q) = (1, \frac{d}{\alpha})$, then $r = 1$,
\item If $(p,q) = (\frac{d}{\alpha}, 1)$, then $r = 1$,
\item If $(p,q) = (\frac{d}{\alpha}, \frac{d}{\alpha})$, then $r = \frac{d}{\alpha}$.
\end{enumerate}
\par
Set \[ (p_1, q_1, r_1) = \left(1,1, \frac{d}{2d-\alpha} \right), \quad (p_2, q_2, r_2) = \left(1,\frac{d}{\alpha}, 1\right)\]  \[(p_3, q_3, r_3) = \left( \frac{d}{\alpha}, 1, 1\right), \quad (p_4, q_4, r_4) = \left(\frac{d}{\alpha}, \frac{d}{\alpha}, \frac{d}{\alpha}\right).\] To establish Theorem \ref{uniformestimate} it suffices to prove that $I_\alpha^\theta$ is of restricted weak type $(p_i, q_i, r_i)$ (with constant $c_i$ that is independent of $\theta$), for $i = 1,2,3,4 $. Then, the result follows by bilinear Marcinkiewicz interpolation, Proposition \ref{Minterpolation}. \par That is, we need to prove the following estimates:
\begin{align}
& \| I_\alpha^\theta(\chi_A, \chi_B) \|_{L^{\frac{d}{2d-\alpha}, \infty}} \leq c_1 |A| |B|,  \label{rest1} \\  
& \| I_\alpha^\theta(\chi_A, \chi_B) \|_{L^{1, \infty}} \leq c_2 |A| |B|^{\frac{\alpha}{d}}, \label{rest2}\\     
&\| I_\alpha^\theta(\chi_A, \chi_B) \|_{L^{1, \infty}} \leq c_3 |A|^{\frac{\alpha}{d}} |B|, \label{rest3} \\   
& \| I_\alpha^\theta(\chi_A, \chi_B) \|_{L^{\frac{d}{\alpha}, \infty}} \leq c_4 |A|^{\frac{\alpha}{d}} |B|^{\frac{\alpha}{d}}, \label{rest4}
\end{align}
uniformly in $\theta$, for all measurable sets $A$ and $B$ with finite measure.
\par
In the proof of \eqref{rest1}-\eqref{rest4} we utilize the following lemma.
\begin{lemma}
Let $d \in \mathbb{N}$, $0 < \alpha < d$ and $a,b > 0$. There exists $c = c(d,\alpha) > 0$ such that
\begin{equation} \label{A1}
\left(\sum_{j \in \mathbb{Z}} 2^{\frac{(\alpha - d)j}{2}} (\min\{2^{dj}, a \})^{\frac{1}{2}} \right)^2 \leq c\, a^{\frac{\alpha}{d}},
\end{equation}
\begin{equation} \label{A2}
 \sum_{j \in \mathbb{Z}} 2^{(\alpha - d)j} \min\{2^{dj}a, b \}  \leq c\, a \left(\frac{b}{a}\right)^{\frac{\alpha}{d}}.
 \end{equation}
\end{lemma}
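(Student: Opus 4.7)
The plan is to treat each inequality by splitting the sum at the threshold where the minimum switches from one argument to the other, and then use $0 < \alpha < d$ to sum the two resulting geometric series.

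For \eqref{A1}, choose $j_0 \in \mathbb{Z}$ so that $2^{dj_0} \le a < 2^{d(j_0+1)}$. Then for $j \le j_0$ the minimum equals $2^{dj}$, so the summand is $2^{(\alpha-d)j/2}\,(2^{dj})^{1/2} = 2^{\alpha j/2}$; since $\alpha > 0$, summing over $j \le j_0$ gives a geometric series controlled by $C\,2^{\alpha j_0/2} \le C\,a^{\alpha/(2d)}$. For $j \ge j_0+1$ the minimum equals $a$, so the summand is $a^{1/2}\,2^{(\alpha-d)j/2}$; since $\alpha - d < 0$, summing over $j \ge j_0+1$ gives $C\,a^{1/2}\,2^{(\alpha-d)j_0/2} \le C\,a^{1/2}\,a^{(\alpha-d)/(2d)} = C\,a^{\alpha/(2d)}$. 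Adding the two bounds and squaring produces the claimed $c\,a^{\alpha/d}$.

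For \eqref{A2}, the same device works with a shifted threshold: choose $j_1 \in \mathbb{Z}$ with $2^{dj_1}a \le b < 2^{d(j_1+1)}a$, i.e.\ $2^{j_1} \approx (b/a)^{1/d}$. For $j \le j_1$ the summand is $2^{(\alpha-d)j}\cdot 2^{dj}a = a\,2^{\alpha j}$, and summing over $j \le j_1$ yields $C\,a\,2^{\alpha j_1} \le C\,a\,(b/a)^{\alpha/d}$. For $j \ge j_1+1$ the summand is $b\,2^{(\alpha-d)j}$, and summing gives $C\,b\,2^{(\alpha-d)j_1} \le C\,b\,(b/a)^{(\alpha-d)/d} = C\,a\,(b/a)^{\alpha/d}$. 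Adding the two bounds gives the claim.

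There is essentially no obstacle beyond careful bookkeeping: the only thing that must be verified is that both halves of each split produce the \emph{same} power of $a$ (respectively, of $a$ and $b/a$), which is exactly what one expects because $j_0$ (respectively $j_1$) is the value at which the two expressions inside the minimum agree. The hypothesis $0 < \alpha < d$ is used twice in each estimate: $\alpha > 0$ makes the low-$j$ tail a convergent geometric series summing up to its largest term, while $\alpha < d$ does the same for the high-$j$ tail.
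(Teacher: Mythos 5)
Your approach is the same as the paper's: split the sum at the index where the two arguments of the minimum cross, then sum the two resulting geometric series, using $\alpha > 0$ for the low tail and $\alpha < d$ for the high tail. There is, however, one small slip in the high-$j$ tail. You bound the sum $\sum_{j \ge j_0+1} a^{1/2}\,2^{(\alpha-d)j/2}$ by $C\,a^{1/2}\,2^{(\alpha-d)j_0/2}$ (fine, with $C$ absorbing the ratio of the geometric series) and then write $2^{(\alpha-d)j_0/2} \le a^{(\alpha-d)/(2d)}$. Since $\alpha - d < 0$, this last inequality is equivalent to $2^{dj_0} \ge a$, which contradicts the defining property $2^{dj_0} \le a$ of your threshold; the inequality goes the wrong way. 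The geometric series over $j \ge j_0+1$ is governed by its \emph{first} term, whose exponent is $j_0+1$, and the defining property $2^{d(j_0+1)} > a$ gives $2^{(\alpha-d)(j_0+1)/2} < a^{(\alpha-d)/(2d)}$ in the correct direction. The paper's proof keeps $m+1$ in precisely this spot for this reason. Equivalently, you can keep $j_0$ and simply absorb the extra factor $2^{(d-\alpha)/2}$ into $c$. The same correction is needed in the high-$j$ tail of your argument for the second inequality, where $2^{(\alpha-d)j_1} \le (b/a)^{(\alpha-d)/d}$ again goes the wrong way and should be $2^{(\alpha-d)(j_1+1)} < (b/a)^{(\alpha-d)/d}$.
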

\begin{proof}
We only prove (\ref{A1}) as the other one is similar. Let $m = \max \{j \in \mathbb{Z} \ | \ 2^{dj} < a \}.$ Then
\begin{equation*}
\begin{split}
\sum_{j \in \mathbb{Z}} 2^{\frac{(\alpha - d)j}{2}} (\min\{2^{dj}, a \})^{\frac{1}{2}} & = \sum_{j = -\infty}^m 2^{\frac{\alpha j }{2}}+\sum_{j = m+1}^{\infty} 2^{\frac{(\alpha - d)j}{2}} a^{\frac{1}{2}} \\
& = \sum_{k = 0}^{\infty} 2^{-\frac{\alpha(k-m)}{2}} + \sum_{i = 0}^{\infty} 2^{\frac{(\alpha - d)}{2}(i + m + 1)} a^{\frac{1}{2}} \\
& = \Big(\sum_{k = 0}^{\infty} 2^{-\frac{\alpha k}{2}} \Big) 2^{\frac{\alpha m}{2}} + \Big(\sum_{i = 0}^{\infty}2^{\frac{(\alpha - d)i}{2}} \Big) 2^{\frac{(\alpha - d)}{2}(m + 1)} a^{\frac{1}{2}}.
\end{split}
\end{equation*}
The desired inequality is achieved by noting that $2^{\frac{\alpha m}{2}} < a^{\frac{\alpha}{2d}}$ and $2^{\frac{(\alpha - d)}{2}(m + 1)} \leq a^{\frac{\alpha - d}{2d}}$.
\end{proof}

\begin{proof}[Proof of Theorem \ref{uniformestimate}]
~\par
First, note that $\mathbb{R}^d$ can be expressed as the union of annuli: \[\mathbb{R}^d = \bigcup_{j \in \mathbb{Z}} \left(B(2^j) \setminus B(2^{j-1})\right),\] where $B(R)$ denotes the open ball in $\mathbb{R}^d$ centered at the origin with radius $R$. \par 
Therefore:
\begin{align*}
I_\alpha^\theta(f,g)(x)& \leq \sum_{j \in \mathbb{Z}} \int_{2^{j-1} \leq |y| \leq 2^j} f(x + (\theta - 1)y) g(x + \theta y) |y|^{\alpha - d} \, \mathrm{d}y \\ 
& \leq \sum_{j \in \mathbb{Z}} 2^{d -\alpha} \int_{2^{j-1} \leq |y| \leq 2^j} f(x + (\theta - 1)y) g(x + \theta y) 2^{(\alpha - d)j} \, \mathrm{d}y \\
& \leq 2^{d -\alpha} \sum_{j \in \mathbb{Z}} 2^{(\alpha - d)j} I_j^\theta(f,g)(x).
\end{align*}
\par
Let $A,B$ be measurable sets of $\mathbb{R}^d$ with finite measure. In what follows, the positive constant $C$ might change from line to line, but it will always be independent of $\theta$. 
\par To prove the restricted estimate (\ref{rest1}) we use (\ref{est1}), (\ref{weaknorm}) with $s = 1/2$, and (\ref{A1}) as follows:
\begin{align*}
\| I_\alpha^\theta(\chi_A, \chi_B)\|_{L^{\frac{d}{2d-\alpha},\infty}} & \leq C \sup_{0 < |E| < \infty} |E|^{-2 + \frac{2d-\alpha}{d}} \left(\int_E \Big|\sum_{j \in \mathbb{Z}} 2^{(\alpha -d)j} I_j^\theta(\chi_A, \chi_B) \Big|^{\frac{1}{2}} \, \mathrm{d}x \right)^2 \\
& \leq C \sup_{0 < |E| < \infty} |E|^{-\frac{\alpha}{d}} \left(\sum_{j \in \mathbb{Z}} 2^{\frac{(\alpha -d)j}{2}} \int_E I_j^\theta(\chi_A, \chi_B)^{\frac{1}{2}} \, \mathrm{d}x\right)^2 \\
& \leq C \, |A| |B| \sup_{0 < |E| < \infty} |E|^{-\frac{\alpha}{d}} \left(\sum_{j \in \mathbb{Z}} 2^{\frac{(\alpha - d)j}{2}} (\min\{2^{dj}, |E| \})^{\frac{1}{2}} \right)^2 \\
& \leq C \, |A| |B|. 
\end{align*}
\par
Next we prove (\ref{rest2}). Here we use (\ref{est2}), (\ref{weaknorm}) with $s = 1/2$, and (\ref{A1}) as follows:
\begin{align*}
\| I_\alpha^\theta(\chi_A, \chi_B)\|_{L^{1,\infty}} & \leq C \sup_{0 < |E| < \infty} |E|^{-2 + 1} \left(\int_E \Big|\sum_{j \in \mathbb{Z}} 2^{(\alpha -d)j} I_j^\theta(\chi_A, \chi_B) \Big|^{\frac{1}{2}} \, \mathrm{d}x \right)^2 \\
& \leq C \sup_{0 < |E| < \infty} |E|^{-1} \left(\sum_{j \in \mathbb{Z}} 2^{\frac{(\alpha -d)j}{2}} \int_E I_j^\theta(\chi_A, \chi_B)^{\frac{1}{2}} \, \mathrm{d}x\right)^2 \\
& \leq C \, \sup_{0 < |E| < \infty} |E|^{-1} |A| |E| \left(\sum_{j \in \mathbb{Z}} 2^{\frac{(\alpha - d)j}{2}} (\min\{2^{dj}, |B| \})^{\frac{1}{2}} \right)^2 \\
& \leq C \, |A| |B|^{\frac{\alpha}{d}}. 
\end{align*}
\par 
The estimate (\ref{rest3}) is based on (\ref{est3}) and is deduced similarly as the one above. \par 
Finally, we turn to (\ref{rest4}). Here we use (\ref{est4}), (\ref{weaknorm}) with $s = 1$, and (\ref{A2}) as follows:
\begin{align*}
\| I_\alpha^\theta(\chi_A, \chi_B)\|_{L^{\frac{d}{\alpha},\infty}} & \leq C \sup_{0 < |E| < \infty} |E|^{-1 + \frac{\alpha}{d}} \int_E \Big|\sum_{j \in \mathbb{Z}} 2^{(\alpha -d)j} I_j^\theta(\chi_A, \chi_B) \Big| \, \mathrm{d}x \\
& \leq C \sup_{0 < |E| < \infty} |E|^{-1 + \frac{\alpha}{d}} \sum_{j \in \mathbb{Z}} 2^{(\alpha -d)j} \int_E I_j^\theta(\chi_A, \chi_B) \, \mathrm{d}x \\
& \leq C \, \sup_{0 < |E| < \infty} |E|^{-1+\frac{\alpha}{d}}  \sum_{j \in \mathbb{Z}} 2^{(\alpha - d)j} \min\{2^{dj} |E|, |A||B| \}\\
& \leq C \, \sup_{0 < |E| < \infty} |E|^{-1+\frac{\alpha}{d}} |E| \left(\frac{|A| |B|}{|E|} \right)^{\frac{\alpha}{d}} \\
& = C \, |A|^{\frac{\alpha}{d}} |B|^{\frac{\alpha}{d}}. 
\end{align*}
This completes the proof of the theorem.
\end{proof}

\subsection{Proof of Proposition \ref{THLS}} \label{section_HA_proofcor}
~ \par 
First, we observe that the tensor $J_\alpha(f,g)$ is pointwise bounded by $ \int_0^1 I_\alpha^\theta (f,g) \, \mathrm{d}\theta$. Indeed, for any $x \in \mathbb{R}^d$ and $v(x) \in \mathbb{R}^d \setminus \{ 0 \}$ it holds that
\begin{equation*}
\begin{split}
|J_\alpha(f,g)(x) v(x)| & = \left| \int_0^1 \int_{\mathbb{R}^d} f(x + (\theta - 1)y) \, g(x + \theta y) |y|^{\alpha -d - 2} (y \cdot v(x)) y \, \mathrm{d}y \, \mathrm{d}\theta \right| \\
& \leq \int_0^1 \int_{\mathbb{R}^d} f(x + (\theta - 1)y) \, g(x + \theta y) |y|^{\alpha -d - 2} |y \cdot v(x)| |y| \, \mathrm{d}y \, \mathrm{d}\theta \\
& \leq  \int_0^1 I_\alpha^\theta(f,g)(x) \, \mathrm{d}\theta |v(x)|
\end{split}
\end{equation*}
and hence
\[|J_\alpha(f,g)(x)| = \sup_{v(x) \neq 0} \frac{|J_\alpha(f,g)(x) v(x)|}{|v(x)|} \leq \int_0^1 I_\alpha^\theta(f,g)(x) \, \mathrm{d}\theta.\] \par
Therefore, using Jensen's inequality we deduce that 
\begin{equation*}
\|J_\alpha(f,g) \|_{L^r}^r  \leq  \int_{\mathbb{R}^d} \left|  \int_0^1 I_\alpha^\theta(f,g) \, \mathrm{d}\theta \right|^r \, \mathrm{d}x \leq  \int_{\mathbb{R}^d} \int_0^1 |I_\alpha^\theta(f,g)|^r \, \mathrm{d}\theta \, \mathrm{d}x.
\end{equation*}
Now, we use Fubini's theorem to obtain
\begin{equation*}
\|J_\alpha(f,g) \|_{L^r}^r  \leq    \int_0^1 \int_{\mathbb{R}^d} |I_\alpha^\theta(f,g)|^r \,  \mathrm{d}x \, \mathrm{d}\theta \\
 =  \int_0^1 \| I_\alpha^\theta(f,g)\|_{L^r}^r \, \mathrm{d}\theta
\end{equation*}
from which the desired result follows upon applying Theorem \ref{uniformestimate}.

\section{Stability for Euler-Riesz systems} \label{section_ER}
In this section, we establish a stability result for smooth solutions of an Euler-Riesz system with periodic boundary conditions, written according to identity (\ref{divSalpha}).
Two smooth solutions are compared using the relative energy functional. Using the abstract formalism developed in \cite{giesselmann2017relative}, we derive an identity that describes the time evolution of the relative energy. The right-hand side of the relative energy identity is controlled
with the help of the HLS inequality and then Gronwall's lemma provides a stability result. \par
Stability results of this type have been obtained for similar systems of equations, where one of the considered solutions is assumed to be merely a weak or even measure-valued solution, yielding a weak-strong uniqueness or measure-valued versus strong uniqueness principle (see \cite{alves2024role, alves2024weak, carrillo2024dissipative,lattanzio2017gas} and references therein).
The result obtained here can be phrased in the language of weak-strong stability, but we avoid doing that and we refer to 
\cite{alves2024weak} for details of such a formulation.
\par
Let $T > 0$ and denote by $\mathbb{T}^d$ the $d$-dimensional open cube $(-1/2, 1/2)^d$. Consider the following Euler-Riesz system in $(0,T) \times \mathbb{T}^d$, expressed using the abstract functional framework developed in \cite{giesselmann2017relative}:
\begin{equation} \label{ER1}
\begin{cases}
\partial_t \rho + \nabla \cdot (\rho u) = 0, \\
\partial_t (\rho u) + \nabla \cdot \big(\rho u \otimes u) + \rho \nabla \dfrac{\delta \mathcal{E}}{\delta \rho} (\rho)= 0,  \\
\rho|_{t=0} = \rho_0, \ u|_{t=0} = u_0,
\end{cases}
\end{equation}
with the potential energy functional $\mathcal{E}$ defined as 
\begin{equation*} \label{functionalE}
\mathcal{E}(\rho) = \int_{\mathbb{R}^d} h(\rho) + \kappa \tfrac{1}{2} \rho (K_\alpha \ast \rho) \, \mathrm{d}x
\end{equation*}
where $h(\rho) = \frac{1}{\gamma -1} \rho^\gamma$ is the internal energy function, $K_\alpha$ is the kernel given by (\ref{kernelK}),
and the constant $\kappa$ represents the interaction strength and for this section it is allowed to take positive and negative values.
The size of $|\kappa|$ will be restricted  to ensure that the relative energy is nonnegative. \par 
The density $\rho$ and velocity $u$ are assumed to be periodic in space with unit period.

\subsection{Relative energy identity} \label{section_ER_relative}
~\par 
The functional derivative $\delta \mathcal{E} / \delta \rho$ is given by
\begin{equation} \label{funct_deriv}
\dfrac{\delta \mathcal{E}}{\delta \rho}(\rho) = h^\prime(\rho) + \kappa K_\alpha \ast \rho
\end{equation}
which can be computed through the formula 
\[\left< \dfrac{\delta \mathcal{E}}{\delta \rho}(\rho) , \varphi\right>  = \int_{\mathbb{T}^d} \dfrac{\delta \mathcal{E}}{\delta \rho}(\rho) \varphi  \, \mathrm{d}x \coloneq \lim_{\delta \to 0} \frac{\mathcal{E}(\rho + \delta \varphi) - \mathcal{E}(\rho)}{\delta} \]
where $\varphi$ is an arbitrary test function. \par 
Furthermore, using (\ref{divSalpha}), we have  
\begin{equation}
\rho \nabla \dfrac{\delta \mathcal{E}}{\delta \rho}(\rho) = \nabla \cdot R_\alpha(\rho)
\end{equation}
where $R_\alpha(\rho) = p(\rho) I_d + \kappa S_\alpha(\rho)$, with $p(\rho) = \rho^\gamma$ being the pressure function. Note that the calculations in the Appendix \ref{appendix1} that lead to identity (\ref{divSalpha}) are valid if one replaces $\mathbb{R}^d$ by $\mathbb{T}^d$ due to the symmetrical assumption on the torus.\par 
The relative potential energy functional $\mathcal{E}( \cdot | \cdot)$ is defined as follows:
\begin{align*}
\mathcal{E}(\rho | \bar \rho)  & = \mathcal{E}(\rho) - \mathcal{E}(\bar \rho) - \left< \dfrac{\delta \mathcal{E}}{\delta \rho}(\bar \rho) , \rho - \bar \rho \right>  \\
& = \int_{\mathbb{T}^d} h(\rho | \bar \rho) + \kappa \tfrac{1}{2} (\rho - \bar \rho) \big(K_\alpha \ast (\rho - \bar \rho)\big) \, \mathrm{d}x
\end{align*}
where $h(\rho | \bar \rho) = h(\rho) - h(\bar \rho) - h^\prime(\bar \rho)(\rho - \bar \rho)$.
\par 
Next, we present the evolution of $\mathcal{E}(\rho | \bar \rho)$ over time, assuming that $\rho$ and $\bar \rho$ evolve according to system (\ref{ER1}). For the full details of the calculations involved, refer to \cite{giesselmann2017relative}. The following holds:
\begin{equation} \label{relP}
\frac{\mathrm{d}}{\mathrm{d}t} \mathcal{E}(\rho | \bar \rho) =  - \int_{\mathbb{T}^d} \nabla \bar u : R_\alpha(\rho | \bar \rho) \, \mathrm{d}x  - \left< \frac{\delta \mathcal{E}}{\delta \rho}(\rho) - \frac{\delta \mathcal{E}}{\delta \rho}(\bar \rho)  , \nabla \cdot \big( \rho(u - \bar u) \big) \right>
\end{equation}
where \[R_\alpha(\rho | \bar \rho) = p(\rho | \bar \rho) I_d + \kappa S_\alpha(\rho | \bar \rho). \]  \par 
Now, the linear velocities satisfy 
\begin{equation*}
\begin{dcases}
\partial_t u + (u \cdot \nabla ) u + \nabla \frac{\delta \mathcal{E}}{\delta \rho}(\rho) = 0, \\
\partial_t \bar u + (\bar u \cdot \nabla ) \bar u + \nabla \frac{\delta \mathcal{E}}{\delta \rho}(\bar \rho) = 0,
\end{dcases}
\end{equation*}
from which it can be deduced that
\begin{equation} \label{relK}
\begin{split}
\frac{\mathrm{d}}{\mathrm{d}t} \int_{\mathbb{T}^d} \tfrac{1}{2} \rho |u - \bar u|^2 \, \mathrm{d}x =  & - \int_{\mathbb{T}^d} \nabla \bar u : \rho (u - \bar u) \otimes (u - \bar u) \, \mathrm{d}x \\
&  + \left< \frac{\delta \mathcal{E}}{\delta \rho}(\rho) - \frac{\delta \mathcal{E}}{\delta \rho}(\bar \rho)  , \nabla \cdot \big( \rho(u - \bar u) \big) \right>.
\end{split}
\end{equation}
Combining (\ref{relP}) with (\ref{relK}) yields the relative total energy identity:
\begin{equation} \label{relTotal}
\frac{\mathrm{d}}{\mathrm{d}t} \left( \mathcal{E}(\rho | \bar \rho) + \int_{\mathbb{T}^d} \tfrac{1}{2} \rho |u - \bar u|^2 \, \mathrm{d}x \right)  =   - \int_{\mathbb{T}^d} \nabla \bar u : \big( \rho (u - \bar u) \otimes (u - \bar u) + R_\alpha(\rho | \bar \rho) \big) \, \mathrm{d}x.\\
\end{equation}

\subsection{Stability of smooth solutions} \label{section_ER_stability}
~\par 
Let $(\rho ,u)$ and $(\bar \rho, \bar u)$ be two smooth solutions of (\ref{ER1}), and suppose additionally that for $(\bar \rho, \bar u)$
the density $\bar \rho$ is bounded away from vacuum, that is, there exist $\bar \delta > 0$ and $\bar M < \infty$ such that 
\begin{equation*}
\bar \delta \leq \bar \rho(t,x) \leq \bar M \quad \text{for} \ (t,x) \in [0,T) \times \mathbb{T}^d
\end{equation*}
and also \[\nabla \bar u \in L^\infty\big(0,T;L^\infty(\mathbb{T}^d)\big).\]
\par 
The solutions $(\rho, u)$ and $(\bar \rho, \bar u)$ satisfy the relative energy identity (\ref{relTotal}). 
Let $\Psi: [0,T) \to \mathbb{R}$ denote the relative energy between this pair of solutions,
\begin{align*}
\Psi(t) &=  \int_{\mathbb{T}^d} \tfrac{1}{2} \rho |u - \bar u|^2 \, \mathrm{d}x + \mathcal{E}(\rho | \bar \rho)  \\
& = \int_{\mathbb{T}^d} \tfrac{1}{2} \rho |u - \bar u|^2 + h(\rho | \bar \rho)+\kappa \tfrac{1}{2} (\rho - \bar \rho) \big( K_\alpha \ast (\rho - \bar \rho) \big) \, \mathrm{d}x.
\end{align*}
The objective is to prove a stability estimate
connecting the behavior at time $T$ to the initial behavior at time zero.
 The  identity (\ref{relTotal}) yields
\begin{equation} \label{Psirel}
\begin{split}
\frac{\mathrm{d}}{\mathrm{d}t} \Psi(t)  = & - \int_{\mathbb{T}^d} \nabla \bar u : \rho (u - \bar u) \otimes (u - \bar u) \, \mathrm{d}x  - \int_{\mathbb{T}^d} (\nabla \cdot \bar u) p(\rho | \bar \rho) \, \mathrm{d}x \\
& - \kappa \int_{\mathbb{T}^d} \nabla \bar u : S_\alpha(\rho | \bar \rho) \, \mathrm{d}x.
\end{split}
\end{equation} 
\par 
To use $\Psi$ as a yardstick for comparing the two solutions, we need to show that $\Psi$ is nonnegative. This is based on two key ingredients. First, 
the HLS inequality (\ref{HLS}) gives
\[ \| (\rho - \bar \rho) \big(K_\alpha \ast (\rho - \bar \rho)\big) \|_{L^1(\mathbb{T}^d)} \leq C_0 \, \|\rho - \bar \rho \|_{L^p(\mathbb{T}^d)}^2 \]
for some positive constant $C_0 = C_0(\alpha, d)$, where $p = 2d/(d + \alpha)$. This is improved by using interpolation
and properties of the function $h(\rho  | \bar \rho)$ to show (see 
\cite[Lemma 3.6]{lattanzio2017gas} for the Newtonian potential and \cite[Proposition 4.2]{alves2024weak} for the general case):
\begin{lemma} \label{Cstar}
Consider the function $h(\rho) = \tfrac{1}{\gamma - 1}\rho^\gamma$ with $\gamma \geq 2 - \alpha/d$ and $0 < \alpha < d$. Let $\rho \in L^\gamma(\mathbb{T}^d)$ be nonnegative, and let $\bar \rho \in L^\infty(\mathbb{T}^d)$ be bounded away from vacuum. Then, there exists a positive constant $C_*$ such that 
\begin{equation}
 \| (\rho - \bar \rho) K_\alpha \ast (\rho - \bar \rho) \|_{L^1(\mathbb{T}^d)}  \leq C_*  \int_{\mathbb{T}^d} h(\rho | \bar \rho) \, \mathrm{d}x .
\end{equation}
\end{lemma}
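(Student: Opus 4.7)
The plan is to reduce the estimate to an $L^p$-bound on $\rho - \bar\rho$ via the Hardy--Littlewood--Sobolev inequality, and then to compare that $L^p$-norm with the relative internal energy by exploiting the strict convexity of $h$ together with the fact that $\bar\rho$ is bounded away from both $0$ and $\infty$. Concretely, HLS (cf.\ Proposition~\ref{HLSprop}), applied to the convolution with $K_\alpha$, gives
\[
\|(\rho - \bar\rho)\, K_\alpha \ast (\rho - \bar\rho)\|_{L^1(\mathbb{T}^d)} \leq C_0\, \|\rho - \bar\rho\|_{L^p(\mathbb{T}^d)}^2, \qquad p = \tfrac{2d}{d + \alpha}.
\]
A short calculation gives $1 \leq p < 2$, and the hypothesis $\gamma \geq 2 - \alpha/d$ is precisely what guarantees $p \leq \gamma$ (the cross-multiplied inequality reduces to $\alpha(d-\alpha) \geq 0$). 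Hence it suffices to establish $\|\rho - \bar\rho\|_{L^p}^2 \leq C\, \int_{\mathbb{T}^d} h(\rho|\bar\rho)\, \mathrm{d}x$.

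For this, split $\mathbb{T}^d = \Omega_1 \cup \Omega_2$ with $\Omega_1 = \{\rho \leq 2\bar M\}$ and $\Omega_2 = \{\rho > 2\bar M\}$. On $\Omega_1$ the pair $(\rho, \bar\rho)$ lies in the compact set $[0, 2\bar M] \times [\bar\delta, \bar M]$, on which the continuous function $(\rho, \bar\rho) \mapsto h(\rho|\bar\rho)/(\rho - \bar\rho)^2$ (extended by $h''(\bar\rho)/2$ along the diagonal) is strictly positive, hence bounded below by some $c_1 > 0$; this yields $h(\rho|\bar\rho) \geq c_1 (\rho - \bar\rho)^2$. On $\Omega_2$ one has $\rho - \bar\rho \geq \rho/2 > \bar M$, and since $h(\rho|\bar\rho)/\rho^\gamma \to 1/(\gamma - 1)$ as $\rho \to \infty$ while being continuous and positive on $\rho \in [2\bar M, \infty)$, an analogous compactness-at-infinity argument yields $h(\rho|\bar\rho) \geq c_2 \rho^\gamma$ on $\Omega_2$. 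Writing $f_i = (\rho - \bar\rho)\chi_{\Omega_i}$, H\"older's inequality (using $p \leq 2$ and $|\mathbb{T}^d| = 1$) gives $\|f_1\|_{L^p}^2 \leq \|f_1\|_{L^2}^2 \leq c_1^{-1}\int h(\rho|\bar\rho)\, \mathrm{d}x$. For the second piece, $|f_2|^p \leq \rho^p$ on $\Omega_2$, so combining the Chebyshev bound $|\Omega_2| \leq (2\bar M)^{-\gamma}\int_{\Omega_2}\rho^\gamma\, \mathrm{d}x$ with H\"older (exponents $\gamma/p$ and $\gamma/(\gamma - p)$) yields
\[
\int_{\Omega_2}|f_2|^p\, \mathrm{d}x \leq \left(\int_{\Omega_2}\rho^\gamma\, \mathrm{d}x\right)^{p/\gamma}|\Omega_2|^{1 - p/\gamma} \leq (2\bar M)^{p-\gamma}\int_{\Omega_2}\rho^\gamma\, \mathrm{d}x \leq C\int h(\rho|\bar\rho)\, \mathrm{d}x,
\]
hence $\|f_2\|_{L^p}^2 \leq C'\left(\int h(\rho|\bar\rho)\, \mathrm{d}x\right)^{2/p}$.

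The main obstacle is the superlinear exponent $2/p = 1 + \alpha/d > 1$ in the last bound, which cannot be absorbed into $\int h$ without a priori control. This is where the hypothesis $\rho \in L^\gamma(\mathbb{T}^d)$ enters: bounding $h(\rho|\bar\rho) \leq h(\rho) + |h'(\bar\rho)|(\rho + \bar\rho) + h(\bar\rho)$ and integrating gives $\int h(\rho|\bar\rho)\, \mathrm{d}x \leq M$ for some $M$ depending on $\|\rho\|_{L^\gamma}$, $\bar\delta$, $\bar M$, and $\gamma$, so that $\left(\int h\right)^{2/p} \leq M^{\alpha/d}\int h$. Summing the two pieces and combining with the HLS reduction yields the claimed inequality with $C_* = C_*(\alpha, d, \gamma, \bar\delta, \bar M, \|\rho\|_{L^\gamma})$.
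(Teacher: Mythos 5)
Your proof is correct and matches the route the paper sketches just before the lemma: reduce via HLS to $\|\rho - \bar\rho\|_{L^p(\mathbb{T}^d)}^2$ with $p = 2d/(d+\alpha)$, then compare with $\int_{\mathbb{T}^d} h(\rho|\bar\rho)\,\mathrm{d}x$ through the strict convexity of $h$ and H\"older. The paper itself cites external references for the details, so there is no in-text proof to compare against, but your two-region split $\Omega_1 = \{\rho \le 2\bar M\}$, $\Omega_2 = \{\rho > 2\bar M\}$ with the lower bounds $h(\rho|\bar\rho) \ge c_1(\rho-\bar\rho)^2$ on $\Omega_1$ and $h(\rho|\bar\rho) \ge c_2\rho^\gamma$ on $\Omega_2$ are the standard ingredients and are handled correctly (the compactness arguments for $c_1,c_2$ work precisely because $\bar\rho$ is bounded away from $0$ and $\infty$). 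The subtlety you flag is genuine: the $\Omega_2$ contribution produces $\big(\int h(\rho|\bar\rho)\,\mathrm{d}x\big)^{2/p}$ with $2/p = 1+\alpha/d > 1$, so $C_*$ ends up depending on $\|\rho\|_{L^\gamma(\mathbb{T}^d)}$, and this cannot be removed when $\gamma < 2$ --- with $\bar\rho \equiv 1$ and $\rho \equiv 1+N$ the left side of the lemma scales like $N^2$ while $\int h(\rho|\bar\rho)\,\mathrm{d}x$ scales like $N^\gamma$, so a $\rho$-independent $C_*$ would fail as $N\to\infty$. (When $\gamma \ge 2$ one can instead bound $\|\rho-\bar\rho\|_{L^2}^2$ directly by $\int h(\rho|\bar\rho)\,\mathrm{d}x$ and the dependence disappears.) Since the paper subsequently fixes $\kappa$ once with $|\kappa| < 2/C_*$, the intended reading is that $C_*$ is uniform; for the smooth periodic solutions of Section~5 this is recovered because $\|\rho(t)\|_{L^\gamma(\mathbb{T}^d)}$ is uniformly bounded on $[0,T)$, a point worth adding to your write-up. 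Apart from this clarification, your argument is a complete and self-contained proof of the lemma.
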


Choosing $\kappa$ so that 
$0 < |\kappa| < \frac{2}{C_*}$  and setting $\lambda \coloneqq 1 - \frac{|\kappa| C_*}{2} > 0$, we obtain 
\[
\lambda \int_{\mathbb{T}^d} h(\rho | \bar \rho) \, \mathrm{d}x \leq \int_{\mathbb{T}^d} h(\rho | \bar \rho) + \kappa \tfrac{1}{2}(\rho - \bar \rho) K_\alpha * (\rho - \bar \rho) \, \mathrm{d}x 
 \]
from which the nonnegativity of $\Psi$ follows.
\par

Next, we bound the terms on the right-hand side of identity (\ref{Psirel}) in terms of $\Psi$. The first term is bounded by the relative kinetic energy, and hence by $\Psi$. The bound for the second term is also clear as $p(\rho | \bar \rho) = (\gamma - 1) h(\rho | \bar \rho)$. Regarding the last term, we first observe that due to the quadratic nature of $S_\alpha(\rho)$ one has
\[S_\alpha(\rho | \bar \rho) = S_\alpha(\rho - \bar \rho).\]
Moreover, for any fixed time $t \in [0,T)$, the $L^1$-norm of $S_\alpha(\rho - \bar \rho)$ is bounded by
\begin{align*}
 \mathcal{I} \coloneqq \tfrac{1}{2} \int_0^1 \int_{\mathbb{T}^d} \int_{\mathbb{T}^d} |(\rho - \bar \rho)(x+(\theta -1)y)| |(\rho - \bar \rho)(x+\theta y)|  |y|^{\alpha - d} \, \rd x \, \rd  y \, \rd \theta
\end{align*}
where the dependency on time is omitted for simplicity. We then estimate
\begin{align*}
\mathcal{I} & \leq  \tfrac{1}{2} \int_0^1 \int_{\mathbb{T}^d} \int_{|z| < 1} |(\rho - \bar \rho)(z)| |(\rho - \bar \rho)(z+y)|  |y|^{\alpha - d}\, \rd z \, \rd  y \, \rd \theta \\
& \leq  \tfrac{1}{2}  \int_{\mathbb{R}^d} \int_{\mathbb{R}^d} |(\rho - \bar \rho)(z)| \chi_{(-2,2)^d}(z) |(\rho - \bar \rho)(w)| \chi_{(-2,2)^d}(w) |z-w|^{\alpha - d}\, \rd z \, \rd  w \\
& \leq C(\alpha, d) \, \|(\rho - \bar \rho)\chi_{(-2,2)^d}  \|_{L^p(\mathbb{R}^d)}^2
\end{align*}
where $p =2d/(d+\alpha)$, by the HLS inequality. Finally, the periodicity in space of $\rho - \bar \rho$ implies that 
\[\|(\rho - \bar \rho)\chi_{(-2,2)^d}  \|_{L^p(\mathbb{R}^d)} = 4^d \|\rho - \bar \rho \|_{L^p(\mathbb{T}^d)}. \]
Hence, similarly to Lemma \ref{Cstar}, 
\begin{align*}
\int_{\mathbb{T}^d} \nabla \bar u : S_\alpha(\rho | \bar \rho) \, \mathrm{d}x & \leq \kappa \|\nabla \bar u \|_{\infty} \|S_\alpha(\rho - \bar \rho) \|_{L^1(\mathbb{T}^d)} \\
& \leq C \int_{\mathbb{T}^d} h(\rho | \bar \rho) \, \mathrm{d}x  \leq C \Psi.
\end{align*}

In summary, we have obtained the following inequality: 
\[\frac{\mathrm{d}}{\mathrm{d}t} \Psi \leq C \Psi. \]
By Gronwall's lemma, for each $t \in [0,T)$, it follows that $\Psi(t) \leq e^{CT} \Psi(0)$ which, together with the strict convexity of the internal energy function $h$, yields the desired stability result.
\par
A weak-strong uniqueness theorem is proved in \cite[Theorem 3.1]{alves2024weak} following the general approach outlined above. 
The method of proof differs in the treatment of the nonlocal term, achieved here via the use of the representation formula \eqref{divSalpha}. This provides an improvement in the range of parameters $\alpha$ achieving the full range $0 < \alpha < d$. By contrast, the range of $\gamma$ is still restricted by $\gamma \ge 2 - \alpha/d$.
\appendix
\section{} \label{appendix1}

Here we give a formal proof that for our  symmetric kernel $K_\alpha: \mathbb{R}^d \to \mathbb{R}$, with 
$K_\alpha(x) = \mathcal K_\alpha(|x|)$ it holds that 
\begin{equation} \label{appendixS}
f\nabla K_\alpha \ast f = \nabla \cdot S_\alpha(f)
\end{equation}
for any sufficiently smooth $f : \mathbb{R}^d \to \mathbb{R}$, where the tensor $S_\alpha(f)$ is given by 
\begin{equation} \label{appendixSdef}
 S_\alpha (f)(x) = -\frac{1}{2} \int_{\mathbb{R}^d} \int_0^1 \mathcal K_\alpha^\prime(|y|) \, \frac{1}{|y|} \, f(x+(\theta - 1)y) \, f(x+\theta y) \, y \otimes y \, \mathrm{d}\theta \, \mathrm{d}y.
\end{equation} \par 
Note that for  $\mathcal K_\alpha (|y|) = \tfrac{1}{d-\alpha} |y|^{\alpha -d}$ we have 
$\mathcal K_\alpha^\prime (|y|) = - |y|^{\alpha - d-1}$ and thus the corresponding tensor is positive semi-definite for nonnegative $f$. \par 
To prove \eqref{appendixS}, we first deduce that
\begin{equation} \label{appendixS2}
f \nabla K_\alpha \ast f (x) = - \frac{1}{2} \int_{\mathbb{R}^d} \mathcal K_\alpha^\prime(y) \frac{y}{|y|} \nabla_x  \cdot \int_0^1 y  f(x + (\theta -1)y)  f(x+\theta y)\, \mathrm{d}\theta \, \mathrm{d}y. 
\end{equation}
Using the symmetry of the convolution one has:
\begin{equation*}
\begin{split}
( f \nabla K_\alpha \ast f ) (x) & = f(x) \int_{\mathbb{R}^d} \mathcal K_\alpha^\prime(|y|) \frac{y}{|y|} f(x - y) \, \mathrm{d}y \\
& = - f(x) \int_{\mathbb{R}^d} \mathcal  K_\alpha^\prime(|y|) \frac{y}{|y|} f(x + y) \, \mathrm{d}y
\end{split}
\end{equation*}
where in the second equality we used the change of variables $y \to -y.$ \par 
Hence
\[f \nabla K_\alpha \ast f (x) = \frac{1}{2} \int_{\mathbb{R}^d} \mathcal  K_\alpha^\prime(|y|) 
\frac{y}{|y|} f(x) \big(f(x-y)-f(x+y) \big) \, \mathrm{d}y.\]
Now it is claimed that 
\[ f(x) \big(f(x-y)-f(x+y) \big) = - \nabla_x \cdot \int_0^1 y f(x + (\theta -1)y) f(x+\theta y)\, \mathrm{d}\theta, \]
from which identity (\ref{appendixS2}) follows. Indeed,
\begin{align*}
- f(x) & \big(f(x-y)-f(x+y)) 
= \int_0^1 \frac{\mathrm{d}}{\mathrm{d}\theta} \Big( f(x + (\theta - 1)y)f(x + \theta y)    \Big) \, \mathrm{d}\theta \\
& = \int_0^1 \big(\nabla f(x + (\theta - 1)y) \cdot y \big) f(x+\theta y) + \big(\nabla f(x + \theta y) \cdot y \big) f(x + ( \theta-1)y) \, \mathrm{d}\theta \\
& = \int_0^1 \nabla \big( f(x+(\theta - 1)y) f(x+\theta y) \big) \, \mathrm{d}\theta \cdot y \\
& = \nabla_x \cdot \int_0^1 y f(x + (\theta -1)y) f(x+\theta y)\, \mathrm{d}\theta
\end{align*}
as desired. \par
Consequently, component-wise one has:
\begin{equation*}
\begin{split}
\big(\nabla \cdot  S_\alpha(f)(x) \big)_i &= - \frac{1}{2} \nabla_x \cdot \int_{\mathbb{R}^d} \int_0^1 \mathcal K_\alpha^\prime(|y|) \frac{y_i}{|y|} y f(x+(\theta - 1)y) f(x+\theta y) \, \mathrm{d}\theta \, \mathrm{d}y \\
& = - \frac{1}{2} \int_{\mathbb{R}^d} \mathcal  K_\alpha^\prime (  |y| ) \frac{y_i}{|y|} \nabla_x  \cdot \int_0^1 y  f(x + (\theta -1)y)  f(x+\theta y)\, \mathrm{d}\theta \, \mathrm{d}y \\
& = \big(f \nabla K_\alpha \ast f(x) \big)_i
\end{split}
\end{equation*}
which establishes identity (\ref{appendixS}).

It is noted that for a $d$-dimensional cube $[-a,a]^d$ centered at the origin, with $a > 0$, and periodic functions with period equal to $2a$, the formulas  \eqref{appendixS} and \eqref{appendixSdef} are still valid with the integrations
performed over $[-a,a]^d$.

\section*{Acknowledgments}
We thank Prof.  {\sc Dennis Serre} for explaining the application of compensated integrability to Euler-Poisson systems that served
as a starting point to this work. NJA would like to thank  Dr. {\sc Andr\'{e} Guerra} for bringing his attention to bilinear integral operators. This research was partially funded by the Austrian Science Fund (FWF), project number 10.55776/F65. The research of AET was supported by KAUST baseline funds.

\end{document}